\DeclareSymbolFont{SY}{U}{psy}{m}{n}
\DeclareMathSymbol{\emptyset}{\mathord}{SY}{'306}
\theoremstyle{plain}
\newtheorem{thm}{Theorem}[section]
\newtheorem{cor}[thm]{Corollary}
\newtheorem{lem}[thm]{Lemma}
\newtheorem{prop}[thm]{Proposition}
\newtheorem{defn}[thm]{Definition}
\theoremstyle{definition}
\newtheorem{rem}[thm]{Remark}
\numberwithin{equation}{section}
\def\beq{\begin{eqnarray}}
\def\eeq{\end{eqnarray}}
\def\beqa{\begin{eqnarray*}}
\def\eeqa{\end{eqnarray*}}
\begin{document}
\title[A note on unitary equivalence of operators acting on reproducing kernel Hilbert spaces]{A note on unitary equivalence of operators acting on reproducing kernel Hilbert spaces }

%    Information for first author
\author{Kui Ji and Shanshan Ji$^{*}$}
%    Current address
\curraddr[K. Ji and S. Ji]{Department of Mathematics Science, Hebei Normal University, Shijiazhuang, Hebei 050016, China}
\email[K. Ji]{jikuikui@gmail.com, jikui@hebtu.edu.cn}
%jikui@hebtu.edu.cn, jikuikui@163.com}
\email[S. Ji]{jishanshan15@outlook.com}
%\thanks{The first author was supported by National Natural Science Foundation of China (Grant No. 11831006 and 11922108).}

\subjclass[2000]{Primary 47C15, 47B37; Secondary 47B48, 47L40}
\keywords{Cowen-Douglas operator, Reproducing kernel, Unitarily equivalent, (IR) decomposition}
\thanks{* Corresponding author}

\begin{abstract}
A well-known theorem due to R. E. Curto and N. Salinas gives a necessary and sufficient condition for the unitary equivalence of commuting tuples of bounded linear operators acting on reproducing kernel Hilbert spaces \cite{CS2}.
Inspired by this theorem, we obtain a different but equivalent criterion for the unitary equivalence of operators acting on reproducing kernel Hilbert spaces.
As an application, we describe the structure of intertwining operator and  prove that the decomposition of Cowen-Douglas operators is unique up to unitary equivalence.
\end{abstract}
\maketitle
\section{Introduction}
Let $\mathcal H$ be an infinite dimensional complex and separable Hilbert space, ${\mathcal L}({\mathcal H})$ the
collection of all bounded linear operators on $\mathcal H$.
Let $T, \widetilde{T}\in{\mathcal L}({\mathcal H})$. We say $T$ is unitarily equivalent to $\widetilde{T}$ (denoted by $T \sim_{u} \widetilde{T}$), if there is a unitary operator $U\in{\mathcal L}({\mathcal H})$ such that $UT = \widetilde{T}U$.
In the paper \cite{CD1}, M. J. Cowen and R. G. Douglas introduced the class $\mathcal{B}_{n}(\Omega)$ of bounded linear operators on ${\mathcal H}$.
They proved that every operator $T\in\mathcal{B}_{n}(\Omega)$ determines a Hermitian holomorphic vector bundle $E_{T}$ with the property:
two Cowen-Douglas operators are unitarily equivalent if and only if the corresponding Hermitian holomorphic vector bundles are congruent.
They also found a set of complete invariants for congruence consisting of the curvature of $E_{T}$ and its covariant partial derivatives. Unfortunately, these invariants are not easy
to compute except when the rank of the bundle is one.
K. Zhu gives another equivalent condition for unitary equivalence of Cowen-Douglas operators by using spanning holomorphic cross-section in \cite{ZKH}.
In \cite{AM}, A. Koranyi and G. Misra classify homogeneous operators in $\mathcal{B}_{n}(\Omega)$ modulo unitary equivalence.

%In \cite{CD1}, an intertwining unitary is constructed between two operators in the Cowen-Douglas class by using the non-zero cross-section of the corresponding holomorphic vector bundles.
Let $T\in\mathcal{B}_{n}(\Omega)$. In \cite{CD1}, it is proved that
$T$ is unitarily equivalent to the adjoint of multiplication operator $M_{z}$ on some reproducing kernel
Hilbert space consisting of holomorphic $\mathbb{C}^{n}-$valued functions defined on $\Omega^{*}:=\{\lambda: \overline{\lambda}\in \Omega\}$.
Without loss of generality, $T\in\mathcal{B}_{n}(\Omega)$ can be viewed as $M^{*}_{z}$, that is, $T=(M^{*}_{z}, {\mathcal H}, K)$,
where the Hilbert space $\mathcal{H}$ consisting of holomorphic functions possessing a reproducing kernel $K$ taking values in $n\times n$ complex matrices.
Furthermore, R. E. Curto and N. Salinas also provide a necessary and sufficient condition for unitary equivalence of
two multiplication operators $M_{z}$ acting on two different Hilbert spaces $\mathcal{H}_{1}, \mathcal{H}_{2}$ with reproducing kernels $K_{1}, K_{2}$\ (see Remark 3.8, \cite{CS2}).

\begin{thm}\cite{CS2}\label{L1}
Let $T_{i}\in \mathcal{B}_{n}(\Omega)$, and $T_{i}=(M^{*}_{z}, \mathcal{H}_{i}, K_{i}),i=1,2$,
where $\mathcal{H}_{i}$ are the analytic function spaces with reproducing kernels $K_{1},K_{2}$, respectively.
Then $T_{1}\sim_{u}T_{2}$ if and only if
\begin{equation}\label{(4.8990)}
K_{1}(\mu, \lambda)=\Phi(\mu)K_{2}(\mu, \lambda)\Phi(\lambda)^{*}
\end{equation}
for a $\mathcal{M}_{n}(\mathbb{C})-$valued holomorphic function $\Phi$ such that $\Phi(\mu)$ is invertible for every $\mu$, where $\mu, \lambda$ in some open subset $\Lambda$ of\, $\Omega^{*}$.
\end{thm}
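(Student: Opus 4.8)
The plan is to establish the two implications separately, the pivotal structural fact in both directions being that, for a Cowen--Douglas operator realised as $M_z^*$, the eigenspace of $T_i$ at $\bar\lambda$ is exactly $\setof{K_i(\cdot,\lambda)\eta:\eta\in\C^n}$, and that these eigenvectors span a dense subspace of $\mathcal H_i$. Throughout I would use the reproducing identity $\langle f,K_i(\cdot,\lambda)\eta\rangle=\langle f(\lambda),\eta\rangle$ together with its consequence $M_z^*K_i(\cdot,\lambda)\eta=\bar\lambda\,K_i(\cdot,\lambda)\eta$, so that $K_i(\cdot,\lambda)\eta$ is a $\bar\lambda$-eigenvector.

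For the direction $(\Leftarrow)$, assume \eqref{(4.8990)} holds for a holomorphic invertible $\Phi$. I would define $U$ on the linear span of the eigenvectors of $T_2$ by
\[
U\big(K_2(\cdot,\lambda)\eta\big)=K_1(\cdot,\lambda)\,\Phi(\lambda)^{-*}\eta ,
\]
and verify isometry: writing \eqref{(4.8990)} as $\Phi(\mu)^{-1}K_1(\mu,\lambda)\Phi(\lambda)^{-*}=K_2(\mu,\lambda)$, the reproducing identity yields
\[
\big\langle K_1(\cdot,\lambda)\Phi(\lambda)^{-*}\eta,\,K_1(\cdot,\mu)\Phi(\mu)^{-*}\zeta\big\rangle
=\langle K_2(\mu,\lambda)\eta,\zeta\rangle
=\big\langle K_2(\cdot,\lambda)\eta,\,K_2(\cdot,\mu)\zeta\big\rangle .
\]
This makes $U$ well defined and isometric, hence extendable by density; running the same construction with $\Phi^{-1}$ gives a two-sided inverse, so $U$ is unitary. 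The relation $UT_2=T_1U$ is immediate on eigenvectors, as $U$ sends the $\bar\lambda$-eigenvector $K_2(\cdot,\lambda)\eta$ to the $\bar\lambda$-eigenvector $K_1(\cdot,\lambda)\Phi(\lambda)^{-*}\eta$.

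For the direction $(\Rightarrow)$, assume $U$ is unitary with $UT_2=T_1U$. Then $U$ carries $\ker(T_2-\bar\lambda)$ isomorphically onto $\ker(T_1-\bar\lambda)$, both of dimension $n$, so there is an invertible $B(\lambda)\in\mathcal{M}_n(\C)$ with $U\big(K_2(\cdot,\lambda)\eta\big)=K_1(\cdot,\lambda)B(\lambda)\eta$. Comparing inner products and using unitarity of $U$ gives $K_2(\mu,\lambda)=B(\mu)^*K_1(\mu,\lambda)B(\lambda)$, that is, \eqref{(4.8990)} with $\Phi(\lambda)=\big(B(\lambda)^*\big)^{-1}$.

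The main obstacle is to show that this $\Phi$ may be taken holomorphic. Here I would use that $w\mapsto K_i(\cdot,\bar w)e_k$, $k=1,\dots,n$, are holomorphic $\mathcal H_i$-valued sections framing the eigenbundle, and that $U$ is bounded, so $w\mapsto U\big(K_2(\cdot,\bar w)e_j\big)$ is again a holomorphic $\mathcal H_1$-valued function with values in $\operatorname{span}\setof{K_1(\cdot,\bar w)e_k}$. To recover the coefficient matrix holomorphically, near a base point $w_0$ I would choose bounded functionals $\phi_1,\dots,\phi_n\in\mathcal H_1^*$ so that $G(w)=\big(\phi_l(K_1(\cdot,\bar w)e_k)\big)_{l,k}$ is invertible (possible since the frame is linearly independent at $w_0$ and $G$ is holomorphic, hence invertible nearby); applying the $\phi_l$ to the frame expansion and inverting $G(w)$ then exhibits $w\mapsto B(\bar w)$ as a holomorphic matrix function. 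Consequently $B(\lambda)$ is holomorphic in $\bar\lambda$, so $\Phi(\lambda)=\big(B(\lambda)^*\big)^{-1}$ is holomorphic in $\lambda$ and invertible. Since this step is local, the identity \eqref{(4.8990)} is obtained exactly on an open subset $\Lambda$ of $\Omega^*$ on which the chosen functionals keep $G$ invertible, which is why the statement is phrased on such a $\Lambda$.
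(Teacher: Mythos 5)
Your proposal is correct and follows essentially the same route as the paper's own treatment: the paper cites Curto--Salinas and sketches precisely this argument in Section 2, namely that the intertwining unitary acts on the kernel eigenvectors $K_2(\cdot,\lambda)\eta$ through a matrix-valued function whose (anti-)holomorphy yields the identity $K_1(\mu,\lambda)=\Phi(\mu)K_2(\mu,\lambda)\Phi(\lambda)^*$, and conversely. The only substantive addition is your explicit argument, via bounded functionals and the invertible matrix $G(w)$, that the coefficient matrix $B$ is holomorphic in $\bar\lambda$ --- a point the paper leaves implicit --- and that argument is sound.
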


In \cite{C12}, it is shown that an intertwiner $X$ between two operators of the form $(M^{*}_{z}, \mathcal{H}_{1}, K_{1})$ and $(M^{*}_{z}, \mathcal{H}_{2}, K_{2})$ exist if and only if there is a fixed linear map $A$ such that $AK_1(z,w)=K_2(z,w)A$ assuming that $K_1$ and $K_2$ are normalized kernels.
Now we obtain a useful alternative criterion for unitary equivalence similar to Theorem \ref{L1} (see Theorem \ref{KK1}).

As is well-known, in finite dimensional space, every $n\times n$ matrix can be uniquely written as an orthogonal direct sum of irreducible matrices up to unitary equivalence.
Compared with this classic conclusion, it is natural to ask if we can arrive at a similar conclusion for irreducible decomposition of operators acting on an infinite dimensional Hilbert space.
In \cite{CFJ4}, Y. Cao, J. Fang and C. Jiang show operators with unique finite (IR) decomposition up to unitary equivalence by using $K_{0}-$group of their commutants.
As an application of the unitary equivalence criterion of R. E. Curto and N. Salinas given above, we prove the uniqueness of the decomposition of reducible Cowen-Douglas operators up to unitary equivalence.

The paper is organized as follows. In Section 2, we introduce some notations and definitions about Cowen-Douglas theory.
In Section 3, motivated by Theorem \ref{L1} due to R. E. Curto and N. Salinas, there is an equivalent condition for unitary equivalence of operators acting on reproducing kernel Hilbert spaces in a different form.
In Section 4, %by the equality in (\ref{(4.8990)}) due to R. E. Curto and N. Salinas,
we prove that the (IR) decomposition of Cowen-Douglas operators is unique up to unitary equivalence. In Section 5, by using Theorem \ref{KK1} and Theorem \ref{Thm}, we describe the structure of the intertwining matrix between the two reproducing kernels.

\section{Preliminaries}
In this section, we review some basic facts and notions for Cowen-Douglas operators and reproducing kernel (see \cite{CD1}, \cite{CS2}).
%The following is the definition of $\mathcal{B}_{n}(\Omega)$.
\subsection{Cowen-Douglas operators}
\begin{defn}\cite{CD1}\label{CD def}
Let $\Omega$ be an open connected subset of complex plane $\mathbb{C}$ and $n$ be a positive integer. The class $\mathcal{B}_{n}(\Omega)$ consists of operators $T\in{\mathcal L}({\mathcal H})$ satisfying:
\begin{itemize}
  \item [(1)]$\Omega\subset\sigma(T)=\{\lambda\in \mathbb{C}:T-\lambda$ is not invertible$\}$;
  \item [(2)]ran$(T-\lambda)=\mathcal{H}$ for every $\lambda$ in $\Omega$;
  \item [(3)]$\bigvee _{\lambda{\in}{\Omega}} \ker(T-\lambda)=\mathcal H$;
  \item [(4)]$\dim \ker (T-\lambda)=n$ for every $\lambda$ in $\Omega$.
\end{itemize}
\end{defn}
It is shown in the same paper that
each operator $T\in\mathcal{B}_{n}(\Omega)$ determines a Hermitian holomorphic vector bundle $(E_{T}, \pi, \lambda)$ over $\Omega$,
namely, the sub-bundle of the trivial bundle $\Omega\times\mathcal H$ defined by
$$E_{T}:=\{(\lambda,x)\in\Omega\times\mathcal H|\ x\in \ker(T-\lambda), \lambda\in\Omega\} $$
with the natural projection map $\pi:E_{T}\rightarrow \Omega$, $\pi(\lambda, x)=\lambda$.
Since dim\ ker$(T-\lambda)=n$ for every $\lambda\in\Omega$, $E_{T}$ is of rank $n$.
Let $\{\gamma_{i}\}_{i=1}^{n}$ be the holomorphic frame of $E_{T}$ (Proposition 1.12, \cite{CD1}), then form the matric at $\lambda$ of inner products
$h(\lambda):=(\langle \gamma_{j}(\lambda),\gamma_{i}(\lambda)\rangle)_{n\times n}, \lambda\in\Omega.$

Let $\xi(\Omega)$ be the algebra consist of the $C^{\infty}$ functions and $\xi^{p}(\Omega)$ denote the $p-$defferential form of $C^{\infty}$ functions. Thus, we have
$$\xi^{0}(\Omega)=\xi(\Omega),\ \xi^{1}(\Omega)=\{fdz+gd\overline{z}: f,g\in \xi(\Omega)\},\ \xi^{2}(\Omega)=\{fdzd\overline{z}: f\in \xi(\Omega)\}.$$
For any vector bundle $E$ which has $C^{\infty}$ differential structure, let $\xi^{p}(\Omega, E)$ denote $p-$differential forms with the coefficients in $E$. Then each element in $\xi^{0}(\Omega, E)$ is one of sections of $E$.
The connection $D$ can be regarded as a differential operator which maps $\xi^{0}(\Omega, E)$ to $\xi^{1}(\Omega, E)$.
Then the canonical connection $D$ preserves the metric and satisfies the following:
$$D(\sum\limits_{i=1}^{n}f_{i}\gamma_{i})=\sum\limits_{i=1}^{n}df_{i}\otimes\gamma_{i}+\sum\limits_{i=1}^{n}\sum\limits_{j=1}^{n}f_{i}\Theta_{j,i}\gamma_{i},$$
with the connection matrix $\Theta=(\Theta_{ij})_{n\times n}=h^{-1}\frac{\partial}{\partial\lambda} h$. And
$$D^{2}=d\Theta+\Theta\wedge\Theta=\frac{\partial}{\partial\overline{\lambda}}(h^{-1}\frac{\partial}{\partial\lambda} h),$$ then $-\frac{\partial}{\partial\overline{\lambda}}(h^{-1}\frac{\partial}{\partial\lambda} h)$ is called the curvature of $E$, denoted by $\mathcal{K}_E$.
In fact, the curvature of $E_{T}$ depends on the choice of the frame, and the curvature under different choices is similar except when $n=1$. Based on a simple calculation, when $T\in\mathcal{B}_{1}(\Omega)$, we have
$$\mathcal{K}_{T}(\lambda)=-\frac{\partial^{2}}{\partial\lambda\partial\overline{\lambda}}\log\parallel e(\lambda)\parallel^{2},$$
where $e(\lambda)\in \ker(T-\lambda)$ is a non-zero cross section of $E_{T}$.
In this case, the curvature does not depend on the choice of the non-vanishing section $e$. If $\widetilde{e}$ is another
non-vanishing holomorphic section, then $\widetilde{e}=\phi e$, where $\phi$ is a holomorphic function
on $\Omega_{0}\subset\Omega$, and $\log|\phi|$ is harmonicity, so curvature is completely unitary invariant.
However, if $T\in\mathcal{B}_{n}(\Omega)(n>1)$, only the eigenvalues of the curvature
are independent of the choice of the holomorphic frame.

The curvature can be regarded as a bundle map, and by the definition of bundle map on a holomorphic vector bundle, we obtain the covariant partial derivatives
$\mathcal{K}_{T,\omega^{i}\overline{\omega}^{j}}$ of the curvature $\mathcal{K}_{T}$:
\begin{itemize}
\item[(1)] $\mathcal{K}_{T,\lambda^{i}\overline{\lambda}^{j+1}}=\frac{\partial}{\partial\overline{\lambda}}(\mathcal{K}_{T,\lambda^{i}\overline{\lambda}^{j}})$;
\item[(2)] $\mathcal{K}_{T,\lambda^{i+1}\overline{\lambda}^{j}}=\frac{\partial}{\partial\lambda}(\mathcal{K}_{T,\lambda^{i}\overline{\lambda}^{j}})+[h^{-1}\frac{\partial}{\partial\lambda}h,\mathcal{K}_{T,\lambda^{i}\overline{\lambda}^{j}}]$.
\end{itemize}
Here is one of the main results from \cite{CD1}.
\begin{thm}\cite{CD1}\label{L1.2}
Let $T,S\in\mathcal{B}_{n}(\Omega)$. Then $T\sim_{u}S$ if and only if there exists an isometric bundle map $V:E_{T}\rightarrow E_{S}$ such that
$$V(\mathcal{K}_{T,\lambda^{i}\overline{\lambda}^{j}})=(\mathcal{K}_{S,\lambda^{i}\overline{\lambda}^{j}})V,\ 0\leq i, j\leq i+j\leq n,\ (i,j)\neq(0,n), (n,0).$$
\end{thm}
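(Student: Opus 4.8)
The plan is to prove both implications, disposing of necessity quickly and concentrating on the rigidity (sufficiency) direction, which is the substantive part.

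\emph{Necessity.} Suppose $T\sim_{u}S$, say $U$ is unitary with $UT=SU$. If $Tx=\lambda x$, then $SUx=UTx=\lambda Ux$, so $U$ carries the fibre $\ker(T-\lambda)$ isometrically onto $\ker(S-\lambda)$ for every $\lambda\in\Omega$; the fibrewise restriction $V:=U|_{E_{T}}$ is therefore an isometric bundle map $E_{T}\to E_{S}$. Because $U$ is a fixed bounded operator, it sends a holomorphic frame of $E_{T}$ to a holomorphic frame of $E_{S}$ and preserves the Hermitian metric; hence it commutes with the canonical connection $D$, and therefore with the curvature $\mathcal{K}$. Feeding this into the recursive formulas that define the covariant derivatives $\mathcal{K}_{T,\lambda^{i}\overline{\lambda}^{j}}$ shows, by induction on $i+j$, that $V$ intertwines every covariant derivative, so the stated relations hold for all $i,j$ and in particular on the indicated range.

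\emph{Sufficiency.} By the Cowen--Douglas theorem identifying unitary equivalence of operators in $\mathcal{B}_{n}(\Omega)$ with congruence of their Hermitian holomorphic bundles \cite{CD1}, it suffices to upgrade the isometric bundle map $V$ to a \emph{congruence}, i.e.\ a holomorphic isometric isomorphism $\Psi:E_{T}\to E_{S}$. I would argue locally and then globalize. Fix $\lambda_{0}\in\Omega$ and choose, for each bundle, a holomorphic frame normalized at $\lambda_{0}$, so that $h(\lambda_{0})=I$ and $\tfrac{\partial}{\partial\lambda}h(\lambda_{0})=0$ (this can be arranged by a holomorphic gauge change prescribing the value and first derivative of $g$ at $\lambda_{0}$). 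In such a frame the connection matrix $\Theta=h^{-1}\tfrac{\partial}{\partial\lambda}h$ vanishes at $\lambda_{0}$, the recursion for the covariant derivatives simplifies, and each Taylor coefficient of $h$ at $\lambda_{0}$ becomes a universal polynomial in the values of the curvature and its covariant derivatives at $\lambda_{0}$.

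The heart of the argument is to show that the finite family $\{\mathcal{K}_{T,\lambda^{i}\overline{\lambda}^{j}}(\lambda_{0}):\,i+j\le n,\ (i,j)\ne(0,n),(n,0)\}$ already determines the \emph{entire} germ of the metric. Here the rank enters decisively: because the fibres are $n$-dimensional, the covariant derivatives obey algebraic, Cayley--Hamilton--type relations that express every derivative of order exceeding $n$ as a combination, with coefficients built from $h$ and lower-order data, of those of order $\le n$; the two omitted corner terms $(0,n)$ and $(n,0)$ are likewise redundant, being recovered from the remaining data through the same rank-$n$ relations. Granting this, the intertwining relations satisfied by $V$ force the normalized metrics of $E_{T}$ and $E_{S}$ to have identical Taylor expansions once transported by the unitary $V(\lambda_{0})$, so $V(\lambda_{0})$ extends to a holomorphic isometric isomorphism of the two bundles on a neighbourhood of $\lambda_{0}$, i.e.\ a local congruence.

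Finally I would globalize. Since the hypothesis on $V$ holds at every point of $\Omega$, the locally constructed holomorphic isometric isomorphisms agree with $V$ on fibres and, by uniqueness of analytic continuation over the connected set $\Omega$, agree on overlaps; hence they glue to a global congruence $\Psi:E_{T}\to E_{S}$, and the congruence theorem yields $T\sim_{u}S$. The principal obstacle is precisely the rigidity step of the preceding paragraph---proving that the covariant derivatives up to order $n$ suffice and that the corner terms add nothing new. This is the genuinely rank-sensitive part of the proof and is exactly what forces the index range $i+j\le n$, $(i,j)\ne(0,n),(n,0)$, appearing in the statement.
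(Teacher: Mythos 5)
First, a point of comparison: the paper does not prove this statement at all --- it is quoted from Cowen--Douglas \cite{CD1} as background (``Here is one of the main results from \cite{CD1}''), so there is no in-paper proof to measure your attempt against. Judging the proposal on its own terms: the necessity direction is correct and standard. A unitary $U$ with $UT=SU$ carries $\ker(T-\lambda)$ isometrically onto $\ker(S-\lambda)$, sends holomorphic frames to holomorphic frames, and preserves the metric, so the bundles are congruent and the curvature together with all its covariant derivatives is intertwined.

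The sufficiency direction, however, has a genuine gap at exactly the step that constitutes the theorem. Your central claim --- that the covariant derivatives obey ``Cayley--Hamilton--type relations'' expressing every $\mathcal{K}_{T,\lambda^{i}\overline{\lambda}^{j}}$ with $i+j>n$, and the corner terms $(n,0),(0,n)$, as algebraic combinations of those with $i+j\le n$ --- is asserted, not proved (``Granting this\ldots''), and it is not an accurate description of why the finite list suffices. The higher covariant derivatives of a rank-$n$ Hermitian holomorphic bundle are not, in general, pointwise algebraic functions of the lower-order ones; if they were, the theorem would reduce to linear algebra at a single point. What Cowen and Douglas actually establish is a rigidity statement about the \emph{bundle map}: if an isometric, a priori only fibrewise-linear, map $V$ intertwines the curvature and its covariant derivatives for the indicated finite index set, then $V$ is forced to be holomorphic and automatically intertwines all higher covariant derivatives as well; this is proved by a delicate inductive argument in the normalized frame that exploits the rank $n$ of the fibres, and it occupies a substantial part of their Acta paper. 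Your outline correctly locates the difficulty (the paragraph beginning ``The heart of the argument'') and the surrounding scaffolding --- normalized frames, recovery of the Taylor coefficients of $h$ from the full family of covariant derivatives, local-to-global passage by analytic continuation --- is sound, but the decisive step is missing, so the proof is incomplete.
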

Thus the curvature $\mathcal{K}_{T}$ and its covariant partial derivatives
$\mathcal{K}_{T, \lambda^{i}\overline{\lambda}^{j}}$ of holomorphic bundle $E_{T}$ corresponding to $T\in \mathcal{B}_{n}(\Omega)$ are complete unitary invariants.
In \cite{MS}, G. Misra and S. Shyam Roy show that the class of the curvature alone does not necessarily determine the class of the vector bundle except in the case of a line bundle.
\subsection{Reproducing kernel}
We use symbol $\mathcal{M}_{n}(\mathbb{C})$ to denote the set of all $n\times n$ matrices over $\mathbb{C}$.
Here are some properties of the reproducing kernel (Lemma 3.2, \cite{CS2}).
Given a bounded domain $\Lambda$,
let $K: \Lambda\times\Lambda\rightarrow \mathcal{M}_{n}(\mathbb{C})$ be a non-negative kernel, holomorphic in the first variable
and anti-holomorphic in the second variable.
As is well-known, there is a $\mathbb{C}^n$-value Hilbert space $\mathcal{H}$ contained
in the space of holomorphic functions defined on $\Lambda$ such that $K(\cdot, \lambda)\xi$ is in $\mathcal{H}$ for all $\lambda\in\Lambda$ and $\xi\in \mathbb{C}^n$, $K$
serves as the reproducing kernel for $\mathcal{H}$, that is, $\langle f, K(\cdot, \lambda)\rangle=f(\lambda)$ for all $f\in\mathcal{H}$. The
Hilbert space $\mathcal{H}$ is said to be analytic function space with reproducing kernel $K$.
Clearly, $K(\mu, \lambda)=K(\lambda, \mu)^{*}$, where $K(\lambda, \mu)^{*}$ is conjugate transpose of $K(\lambda, \mu),\ \mu, \lambda\in\Lambda$.
%\begin{defn}\cite{D10}
%A non negative definite function $K: \Omega\times\Omega\rightarrow \mathcal{M}_{n}(\mathbb{C})$ which is holomorphic in
%the first variable and anti-holomorphic in the second variable is said to be a reproducing kernel on $\Omega$ if it satisfies the positivity condition:
%$$\sum\limits_{i, j=1}^{q}\langle K(\omega_{i}, \omega_{j})\alpha_{j}, \alpha_{i}\rangle_{\mathbb{C}^{n}}\geq0, \omega_{1}, \cdots, \omega_{q}\in \Omega, \alpha_{1}, \cdots, \alpha_{q}\in\mathbb{C}^{n}, q\geq1.$$
%\end{defn}
%The matrix-valued function $K(\cdot, \cdot)$ of two variables is non-negative definite means that in the sense of the $n\times n$ matrix $(\!(K(\lambda_{i}, \lambda_{j}))\!)_{i, j=1}^{n}$ is non-negative definite, $\lambda_{i}, \lambda_{j}\in \Lambda$.

%Let $\mathcal H$ be a $\mathbb{C}^{n}-$valued functional Hilbert space over $\Lambda$.If ${\mathcal H}$ possesses a reproducing kernel $K$, the kernel $K$ reproduces the value of functions in ${\mathcal H}$, that is, for any fixed $\lambda\in\Lambda$,$$f(\lambda)=\langle f, K(\cdot, \lambda)\rangle,\ f\in{\mathcal H},$$and $\mathcal{H}=\overline{span}\{K(\cdot, \lambda)\xi, \xi\in \mathbb{C}^{n}\}.$
%Therefore, $\langle K(\mu, \lambda), K(\mu, \lambda)\rangle=K(\lambda, \lambda), \mu, \lambda\in\Lambda$.
%Finally, if $\{e_{n}\}_{n=0}^{\infty}$ is any orthonormal basis of ${\mathcal H}$, then the reproducing kernel $K$ can be expressed as:
%$$K(\mu, \lambda)=\sum\limits_{n=0}^{\infty}e_{n}(\mu)e_{n}^{*}(\lambda), \mu, \lambda\in\Lambda.$$

In \cite{CD1}, M. J. Cowen and R. G. Douglas described the relationship between $T\in\mathcal{B}_{1}(\Omega)$ and multiplication operator on some Hilbert space ${\mathcal H}$.
%$M^{*}_{z}$,$M_{z}$where $M^{*}_{z}$ is the adjoint of
Let $\gamma$ be a non-vanishing holomorphic cross-section of $E_{T}$ (which exists by Grauert's theorem \cite{HG}). Naturally, corresponding to $\gamma$ is the representation $\Gamma$ on ${\mathcal H}$, defined by $(\Gamma x)(\mu)=\langle x, \gamma(\overline{\mu})\rangle,\ x\in{\mathcal H},\ \mu\in \Omega^{*}$.
Thus, $\Gamma$ is unitary and $ran\Gamma={\mathcal H}$.
Define function $K$ as $K(\mu, \lambda)=\Gamma(\gamma(\overline{\lambda}))(\mu)=\langle\gamma(\overline{\lambda}), \gamma(\overline{\mu})\rangle,\ \mu, \lambda\in\Omega^{*}$.
Then $$\Gamma T\Gamma^{*}K(\cdot, \lambda)=\Gamma T\Gamma^{*}\Gamma(\gamma(\overline{\lambda}))=\Gamma T(\gamma(\overline{\lambda}))=\Gamma\overline{\lambda}\gamma(\overline{\lambda})=\overline{\lambda}\Gamma\gamma(\overline{\lambda})=\overline{\lambda}K(\cdot, \lambda),\ \lambda\in \Omega^{*},$$
$K(\cdot, \lambda)$ is an eigenvector for $\Gamma T\Gamma^{*}$ with eigenvalue $\overline{\lambda}$.
Note that $\mathcal{H}=\overline{span}\{K(\cdot, \lambda),\ \lambda\in \Omega^{*}\}$, so $\Gamma T\Gamma^{*}$ is the adjoint of the multiplication operator
on a holomorphic function space ${\mathcal H}$ possessing a reproducing kernel $K$.
Without loss of generality, $T$ can be view as $M^{*}_{z}$, that is, $T=(M^{*}_{z}, {\mathcal H}, K)$.
More generally, for $T\in\mathcal{B}_{n}(\Omega)(n>1)$, similar results are proved,
$T$ can be realized as the adjoint $M^{*}_{z}$, where multiplication operator $M_{z}$
on a reproducing kernel Hilbert space of holomorphic $\mathbb{C}^{n}-$valued functions.

Suppose that two multiplication operators acting on two different $\mathbb{C}^{n}-$valued Hilbert spaces $\mathcal{H}_{1}, \mathcal{H}_{2}$ with reproducing kernels $K_{1}, K_{2}$, and $X: \mathcal{H}_{1}\rightarrow\mathcal{H}_{2}$ is a bounded operator intertwining these two operators.
We know that $\mathcal{H}_{i}=\overline{span}\{K_{i}(\cdot, \lambda)\xi, \lambda\in\Omega^{*}, \xi\in \mathbb{C}^{n}\}, i=1, 2$,
and $XK_{1}(\cdot, \lambda)\xi=K_{2}(\cdot, \lambda)\Psi(\lambda)\xi (\xi\in\mathbb{C}^{n}$)
for some $\mathcal{M}_{n}(\mathbb{C})-$valued function $\Psi$. Since $K_{i}(\cdot, \cdot), i=1, 2$ are anti-holomorphic in the second variable, it follows that $\Psi$ is anti-holomorphic.
In fact, $X^{*}$ is the multiplication operator $M_{\overline{\Psi}^{T}}$.
Furthermore, if two operators are unitarily equivalent, then there is a unitary operator $U$ intertwining them. Hence, $K_{1}(\cdot, \lambda)\xi=U^{*}K_{2}(\cdot, \lambda)\Phi^{*}(\lambda)\xi$, $U^{*}$ is multiplication operator $M_{\Phi}$,
where holomorphic function $\Phi$ satisfies for every $\mu$, $\Phi(\mu)$ is invertible.
It follows that $$K_{1}(\mu, \lambda)=\Phi(\mu)K_{2}(\mu, \lambda)\Phi^{*}(\lambda).$$
Conversely, if the equality above holds for two reproducing kernels $K_{1}, K_{2}$, then the corresponding operators are unitarily equivalent.
That is to say, the above equality is a sufficient and necessary condition for the unitary equivalence of the two operators
(See more details in \cite{CS2}).

\section{Unitary equivalence of operators acting on reproducing kernel Hilbert spaces}
%Theorem \ref{L1} gives an equivalent condition for the unitary equivalence of two operators from the perspective of reproducing kernel.
%It states that when the multiplication operator $M_{z}^{*}$ acting on Hilbert spaces $\mathcal{H}_{1}, \mathcal{H}_{2}$ are unitarily, the reproducing kernels corresponding to these spaces have a certain relation. This result provides convenience for characterizing unitary equivalence.

In this section, by utilizing the reproducing kernels, we give another form of unitary equivalence of operators acting on reproducing kernel Hilbert spaces. By using this new form, we can also describe the structure of the intertwining matrix between two kernels related to the operators which are both reducible. Before introducing this theorem,  we need to describe the commutants of operators. %The commutator of an operator is closely related to many of its properties.
Let $$\mathcal{A}'(T):=\{X|XT=TX\},\ \mathcal{A}'(T, T^{*}):=\{X|XT=TX,\ XT^{*}=T^{*}X\}.$$
$T\in{\mathcal L}({\mathcal H})$ is called irreducible, if the commutant of $T$ doesn't have any nontrivial self-adjoint idempotent operator,
equivalently, there is no non-trivial reducing subspace of $T$ (cf \cite{G5}, \cite{Hal3}). %$T$ cannot be written as the orthogonal sum of two operators
The irreducibility is a unitary invariant. In what follows, $T\in(IR)$ means that $T$ is an irreducible operator.
\begin{prop}\label{2111.151}
Let $T\in \mathcal{B}_{n}(\Omega)$. If $T=T_{1}\oplus T_{2} \oplus\cdots \oplus T_{k}$, then $k\leq n$ and $T_i$ is Cowen-Douglas operator for all $1\leq i\leq k$.
\label{pro1}
\end{prop}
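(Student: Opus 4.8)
The plan is to verify, for each summand $T_i$, the four defining conditions of Definition \ref{CD def}, and to extract the bound $k\leq n$ along the way. Write $\mathcal{H}=\mathcal{H}_{1}\oplus\cdots\oplus\mathcal{H}_{k}$ with $T_i\in\mathcal{L}(\mathcal{H}_i)$, assume without loss of generality that each $\mathcal{H}_i\neq\{0\}$, and let $P_i$ denote the orthogonal projection of $\mathcal{H}$ onto $\mathcal{H}_i$. The starting observation is that the direct sum is compatible with the operators $T-\lambda$: for every $\lambda\in\Omega$ one has $\ker(T-\lambda)=\bigoplus_{i=1}^{k}\ker(T_i-\lambda)$ and $\operatorname{ran}(T-\lambda)=\bigoplus_{i=1}^{k}\operatorname{ran}(T_i-\lambda)$. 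Everything then reduces to tracking how the conditions in Definition \ref{CD def} distribute across the factors.

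First I would establish conditions (2) and the finiteness needed for (4). Since $\operatorname{ran}(T-\lambda)=\mathcal{H}$ and the range splits as above, each $\operatorname{ran}(T_i-\lambda)$ must equal $\mathcal{H}_i$, which is condition (2) for $T_i$; in particular $T_i-\lambda$ is surjective. Because $\ker(T_i-\lambda)\subseteq\ker(T-\lambda)$ is finite-dimensional of dimension at most $n$, the operator $T_i-\lambda$ is Fredholm with $\operatorname{ind}(T_i-\lambda)=\dim\ker(T_i-\lambda)$.

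The crux of the argument, and the step I expect to be the main obstacle, is showing that $\dim\ker(T_i-\lambda)$ does not depend on $\lambda$; a priori only the total $\sum_i\dim\ker(T_i-\lambda)=n$ is known to be constant. Here I would invoke the local constancy of the Fredholm index: since $\lambda\mapsto T_i-\lambda$ is norm-continuous and $\Omega$ is connected, the integer-valued index is constant on $\Omega$, say equal to $n_i$, which gives condition (4) for $T_i$. Summing over $i$ yields $\sum_{i=1}^{k}n_i=n$.

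Finally I would obtain condition (3) together with the positivity $n_i\geq 1$, and then condition (1). Applying the bounded projection $P_i$ to the spanning relation (3) for $T$ and using $P_i\big(\ker(T-\lambda)\big)=\ker(T_i-\lambda)$, one gets $\mathcal{H}_i=P_i\mathcal{H}\subseteq\bigvee_{\lambda\in\Omega}\ker(T_i-\lambda)$, so condition (3) holds for $T_i$. As $\mathcal{H}_i\neq\{0\}$, the constant $n_i$ cannot be zero, hence $n_i\geq 1$; combined with $\sum_i n_i=n$ this forces $k\leq n$. Condition (1) is then immediate, since $n_i\geq 1$ means $T_i-\lambda$ is not injective for $\lambda\in\Omega$, whence $\Omega\subseteq\sigma(T_i)$. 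Having verified all four conditions, each $T_i\in\mathcal{B}_{n_i}(\Omega)$ is a Cowen-Douglas operator, completing the proof.
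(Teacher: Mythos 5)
Your proposal is correct and follows essentially the same route as the paper's proof: split the range and kernel of $T-\lambda$ across the summands to get conditions (2)--(4), use surjectivity plus finite-dimensional kernel to see $T_i-\lambda$ is Fredholm, invoke continuity of the index on the connected set $\Omega$ to make $n_i=\dim\ker(T_i-\lambda)$ constant, and rule out $n_i=0$ via $\mathcal{H}_i\neq\{0\}$ and the spanning condition, which gives $\sum_i n_i=n$ and hence $k\leq n$. The only cosmetic difference is that you obtain condition (3) by applying the projection $P_i$ to the closed span, while the paper distributes $\bigvee_{\lambda}$ over the orthogonal direct sum directly; these are equivalent.
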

\begin{proof}
If $T=T_{1}\oplus T_{2} \oplus\cdots \oplus T_{k}\in {\mathcal L}({\mathcal H})$, then $T_{i}\in{\mathcal L}({\mathcal H}_i)$ for non-zero Hilbert spaces ${\mathcal H_{i}}$, $1\leq i\leq k$.
Here $T_{i}$, $1\leq i\leq k$ may be reducible or irreducible.
%Since the Fredholm index of $T$ is finite,In the following, we will prove that $T_{i}$, $1\leq i\leq k$ belongs to the Cowen-Douglas class.

According to Definition \ref{CD def}, we have ran$(T-\lambda)=\mathcal{H}$ for every $\lambda$ in $\Omega$. So
for every $y=(y_{1}, y_{2}, \cdots, y_{k})^{T}\in {\mathcal H}$, there exists $x=(x_{1}, x_{2}, \cdots, x_{k})^{T}\in {\mathcal H}$,
such that $(T-\lambda)x=y$, where $x_{i}, y_{i}\in {\mathcal H}_{i}$. By the arbitrariness of each component in $y$, we obtain that
for every $y_{i}\in {\mathcal H}_{i}$, there exist $x_{i}\in {\mathcal H}_{i}$ such that $(T_{i}-\lambda)x_{i}=y_{i},\ 1\leq i\leq k$.
Then ran$(T_{i}-\lambda)=\mathcal{H}_{i}$ for every $\lambda$ in $\Omega$, $1\leq i\leq k$.

Let $(z_{1}, z_{2}, \cdots, z_{k})^{T}\in\ker(T-\lambda)$. We have $z_{i}\in\ker(T_{i}-\lambda),1\leq i\leq k$. Conversely, if $z_{i}\in\ker(T_{i}-\lambda),1\leq i\leq k$, then
$(z_{1}, z_{2}, \cdots, z_{k})^{T}\in\ker(T-\lambda)$.
It follows that
\begin{equation}
\ker(T-\lambda)=\ker(T_{1}-\lambda)\oplus \ker(T_{2}-\lambda)\oplus \cdots \oplus \ker(T_{k}-\lambda).
\label{(1.2)}
\end{equation}
By Definition \ref{CD def} again, we have $\bigvee_{\lambda{\in}{\Omega}} \ker(T-\lambda)=\mathcal H$. Combining with equation (\ref{(1.2)}), we have
$$\begin{array}{lll}
{\mathcal H_{1}}\bigoplus {\mathcal H_{2}} \bigoplus\cdots \bigoplus {\mathcal H_{k}}
&=&\bigvee \limits_{\lambda{\in}{\Omega}} \ker(T-\lambda)\\
&=&\bigvee \limits_{\lambda{\in}{\Omega}}\big( \ker(T_{1}-\lambda)\bigoplus \ker(T_{2}-\lambda)\bigoplus \cdots \bigoplus \ker(T_{k}-\lambda)\big)\\
&=&\bigvee \limits_{\lambda{\in}{\Omega}}\ker(T_{1}-\lambda)\bigoplus \bigvee \limits_{\lambda{\in}{\Omega}}\ker(T_{2}-\lambda)\bigoplus\cdots\bigoplus \bigvee \limits_{\lambda{\in}{\Omega}}\ker(T_{k}-\lambda).
\end{array}$$
Therefore, $\bigvee \limits_{\lambda{\in}{\Omega}}\ker(T_{i}-\lambda)={\mathcal H_{i}},\ 1\leq i\leq k.$

Using formula (\ref{(1.2)}) again, we have dim\ $\ker(T-\lambda)=\sum_{i=1}^{k}$dim\;$\ker(T_{i}-\lambda)=n$ and dim\;$\ker(T_{i}-\lambda)$ is finite, $\lambda\in\Omega$. Thus, $T_{i}-\lambda$ is Fredholm operator and the index $\mbox{ind}(T_i-\lambda)=\mbox{dim}\,\ker (T_i-\lambda)\leq n$, since ran$(T_{i}-\lambda)=\mathcal{H}_{i}$ for $\lambda\in\Omega$ and $1\leq i\leq k$.
%Since $T\in \mathcal{B}_{n}(\Omega)$, it is easy to see that $\mbox{ind}(T-\lambda)=\mbox{dim}\,\ker (T-\lambda)=n$ for all $\lambda\in\Omega$.
Setting $n_{i}:=\mbox{ind}(T_{i}-\lambda)$ for some $\lambda\in\Omega$, $1\leq i\leq k$. The continuity of index implies that for all $\mu$ in the same connected component of $\Omega$ containing $\lambda$, $n_{i}=\mbox{ind}(T_{i}-\lambda)$, $1\leq i\leq k$.
Claim $n_i>0, 1\leq i\leq k$. Otherwise, if there exist $i_0,1\leq i_0\leq k$ such that $n_{i_0}=0$, then $\mathcal{H}_{i_0}=\{0\}$. This contradicts to $\mathcal{H}_{i_0}$ being non-zero.
%The equation dim\;$\ker(T_{i}-\lambda)=0$ can only be established at some points, and it can't be established in a neighborhood of $\omega_{0}\in\Omega$.
Thus we infer that $\Omega\subset\sigma(T_{i})$ and $T_{i}\in \mathcal{B}_{n_{i}}(\Omega)$, $1\leq i\leq k$.
\end{proof}
%It is clear that if $T$ is a Cowen-Douglas operator, then $T$ can be written as a orthogonal direct sum of finitely many (IR) Cowen-Douglas operators.
%One says that $T$ has a finite (IR) decomposition if $T$ can be written as the direct sum of finitely many irreducible operators.

\begin{defn}\label{defe}
Let $T\in \mathcal{B}_{n}(\Omega)$, $T=(M^{*}_{z}, \mathcal{H}, K)$,
where $\mathcal{H}$ be the analytic function space with reproducing kernel $K$.
And $T$ has the following decomposition
$$T=\bigoplus\limits_{i=1}^{t}T_{i}^{(m_{i})},\ where \,\,T_{i}\in \mathcal{B}_{n_{i}}(\Omega),\ \sum\limits_{i=1}^{t}m_{i}n_{i}=n.$$
This decomposition is said to be (IR) decomposition, if $T_{i}\in (IR)$ and $T_{i}\nsim_{u} T_{j}(i\neq j)$.
\end{defn}
Therefore, from Proposition \ref{2111.151} it follows that there is a finite (IR) decomposition for each Cowen-Douglas operator. %for any Cowen-Douglas operator $T$, $T$ has a finite (IR) decomposition.
The following two lemmas characterize $\mathcal{A}'(T, T^{*})$ of operator $T$.
\begin{lem}\cite{FJW6}\label{Lyin}
If $T$ is irreducible on ${\mathcal H}$ and there is $X\in{\mathcal L}({\mathcal H})$ such that $X\in\mathcal{A}'(T, T^{*})$,
then $X$ is a scalar multiple of identity.
\end{lem}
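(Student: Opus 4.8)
The plan is to recognize $\mathcal{A}'(T, T^{*})=\{X : XT=TX,\ XT^{*}=T^{*}X\}$ as a von Neumann algebra and then to convert the irreducibility hypothesis into the statement that this algebra contains no nontrivial projections, from which $X=\lambda I$ will follow by the spectral theorem. First I would check that $\mathcal{A}'(T, T^{*})$ is indeed a von Neumann algebra: it is the commutant $\{T, T^{*}\}'$ of the self-adjoint set $\{T, T^{*}\}$, hence weakly closed, and it is closed under taking adjoints, since $XT=TX$ forces $T^{*}X^{*}=X^{*}T^{*}$ and $XT^{*}=T^{*}X$ forces $TX^{*}=X^{*}T$, so that $X^{*}\in\mathcal{A}'(T, T^{*})$ whenever $X\in\mathcal{A}'(T, T^{*})$.

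Next I would translate irreducibility into algebraic terms. Any self-adjoint idempotent $P\in\mathcal{A}'(T, T^{*})$ in particular lies in $\mathcal{A}'(T)$; by the definition of irreducibility the commutant $\mathcal{A}'(T)$ has no nontrivial self-adjoint idempotent, so $P\in\{0, I\}$. Thus the only projections in $\mathcal{A}'(T, T^{*})$ are the trivial ones. Now, given any $X\in\mathcal{A}'(T, T^{*})$, I would split it into its real and imaginary parts $A=\tfrac{1}{2}(X+X^{*})$ and $B=\tfrac{1}{2i}(X-X^{*})$; by the $*$-closure established above, both self-adjoint operators $A$ and $B$ again belong to $\mathcal{A}'(T, T^{*})$. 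Applying the spectral theorem to $A$, every spectral projection of $A$ lies in the abelian von Neumann algebra generated by $A$ and $I$, hence in $\mathcal{A}'(T, T^{*})$, and is therefore $0$ or $I$. This forces the spectral measure of $A$ to be concentrated at a single point, i.e.\ $A=\alpha I$ for some $\alpha\in\mathbb{R}$; likewise $B=\beta I$, and consequently $X=(\alpha+i\beta)I$ is a scalar multiple of the identity.

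The step I expect to be the crux is the use of the spectral theorem inside the algebra: one must know that the spectral projections of the self-adjoint element $A$ remain in $\mathcal{A}'(T, T^{*})$. This is precisely where the weak closure (equivalently, the bicommutant theorem, or closure under Borel functional calculus) is indispensable, since mere norm closure would only yield that continuous functions of $A$ lie in the algebra and would not produce the spectral projections needed to invoke irreducibility. With that closure in hand, the remaining arguments are routine.
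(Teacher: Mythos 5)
Your argument is correct. Note, however, that the paper does not prove this lemma at all: it is quoted from Fang--Jiang--Wu \cite{FJW6} as a known result, so there is no in-paper proof to compare against. Your proof is the standard one for exactly this situation: $\mathcal{A}'(T,T^{*})=\{T,T^{*}\}'$ is the commutant of a self-adjoint set, hence a von Neumann algebra; irreducibility (as defined in the paper, namely that $\mathcal{A}'(T)$ contains no nontrivial self-adjoint idempotent) kills every projection in $\mathcal{A}'(T,T^{*})$; and the Cartesian decomposition $X=A+iB$ together with the fact that the spectral projections of $A$ lie in $\{A\}''\subseteq\{T,T^{*}\}'$ forces $\sigma(A)$ to be a single point, so $A=\alpha I$, $B=\beta I$, and $X=(\alpha+i\beta)I$. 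You correctly identify the one place where weak closure (equivalently, Borel functional calculus inside the algebra) is genuinely needed, and every step checks out.
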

Based on the lemma above, we have the following lemma, which describes $\mathcal{A}'(T, T^{*})$ when $T$ is reducible.
\begin{lem}\label{Lyin2}
If $T=\bigoplus_{i=1}^{t}T_{i}^{(m_{i})},\,T_{i}\in {\mathcal L}({\mathcal H}_{i})$ satisfies Definition \ref{defe},
and there exists $X\in{\mathcal L}({\mathcal H})$ such that $X\in\mathcal{A}'(T, T^{*})$, then
$X=diag[X_{1}, X_{2}, \cdots, X_{t}]$, where $X_{p}=(\!(c^{p}_{i,j}I_{\mathcal H_{p}})\!)_{i,j=1}^{m_{p}}$, $c^{p}_{i,j}$ are scalars, $1\leq p\leq t$.
In particular, $\mathcal{A}'(T, T^{*})\cong \sum\limits_{i=1}^t\bigoplus \mathcal{M}_{m_i}(\mathbb{C})$.
\end{lem}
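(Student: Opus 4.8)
The plan is to analyze $X$ through its block decomposition relative to $\mathcal{H}=\bigoplus_{i=1}^{t}\mathcal{H}_{i}^{(m_{i})}$, using the two hypotheses $XT=TX$ and $XT^{*}=T^{*}X$ to constrain each block, and then invoking Lemma \ref{Lyin} to pin down the surviving entries. First I would set up coordinates: index the summands of $\mathcal{H}$ by pairs $(p,a)$ with $1\leq p\leq t$ and $1\leq a\leq m_{p}$, the $(p,a)$-th summand being a copy of $\mathcal{H}_{p}$ on which $T$ restricts to $T_{p}$. Writing $X=(X_{(p,a),(q,b)})$ in block form, the relation $XT=TX$ reads $X_{(p,a),(q,b)}T_{q}=T_{p}X_{(p,a),(q,b)}$ and $XT^{*}=T^{*}X$ reads $X_{(p,a),(q,b)}T_{q}^{*}=T_{p}^{*}X_{(p,a),(q,b)}$. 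Thus every block $Y:=X_{(p,a),(q,b)}$ simultaneously intertwines $T_{q}$ with $T_{p}$ and $T_{q}^{*}$ with $T_{p}^{*}$.

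Next I would treat the diagonal-in-type blocks ($p=q$): here $Y$ commutes with both $T_{p}$ and $T_{p}^{*}$, and since $T_{p}$ is irreducible, Lemma \ref{Lyin} forces $Y=c^{p}_{a,b}I_{\mathcal{H}_{p}}$ for a scalar $c^{p}_{a,b}$. The heart of the argument is the off-diagonal case $p\neq q$, where I want to show $Y=0$. Taking adjoints of the two intertwining relations shows that $Y^{*}$ intertwines $(T_{p},T_{q})$ and $(T_{p}^{*},T_{q}^{*})$; consequently $Y^{*}Y$ commutes with $T_{q}$ and $T_{q}^{*}$, while $YY^{*}$ commutes with $T_{p}$ and $T_{p}^{*}$. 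Applying Lemma \ref{Lyin} to each irreducible operator gives $Y^{*}Y=cI_{\mathcal{H}_{q}}$ and $YY^{*}=dI_{\mathcal{H}_{p}}$ with $c,d\geq 0$. If $c=0$ then $Y=0$; if $c>0$ then $V:=c^{-1/2}Y$ is an isometry with $VV^{*}=(d/c)I_{\mathcal{H}_{p}}$ a projection, which forces $d=c$ and hence makes $V$ unitary. But then $VT_{q}=T_{p}V$ exhibits $T_{q}\sim_{u}T_{p}$, contradicting $T_{q}\nsim_{u}T_{p}$ from Definition \ref{defe}. Therefore $c=0$ and $Y=0$, so all off-diagonal blocks vanish.

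This yields $X=\mathrm{diag}[X_{1},\dots,X_{t}]$ with $X_{p}=(\!(c^{p}_{a,b}I_{\mathcal{H}_{p}})\!)_{a,b=1}^{m_{p}}$, matching the asserted form. For the final isomorphism I would identify $\mathcal{H}_{p}^{(m_{p})}$ with $\mathbb{C}^{m_{p}}\otimes\mathcal{H}_{p}$, under which $T$ becomes $\bigoplus_{p}(I_{m_{p}}\otimes T_{p})$ and any such $X_{p}$ becomes $C_{p}\otimes I_{\mathcal{H}_{p}}$ for the scalar matrix $C_{p}=(c^{p}_{a,b})\in\mathcal{M}_{m_{p}}(\mathbb{C})$. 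Conversely, every $C_{p}\otimes I_{\mathcal{H}_{p}}$ commutes with $I_{m_{p}}\otimes T_{p}$ and its adjoint, so the correspondence $X\mapsto(C_{1},\dots,C_{t})$ is a well-defined $*$-isomorphism of $\mathcal{A}'(T,T^{*})$ onto $\sum_{i=1}^{t}\bigoplus\mathcal{M}_{m_{i}}(\mathbb{C})$. The main obstacle is the off-diagonal vanishing, specifically the polar-decomposition step that converts the scalar identities $Y^{*}Y=cI$ and $YY^{*}=dI$ into unitarity of $c^{-1/2}Y$; once that is in place, everything else is bookkeeping with the block structure.
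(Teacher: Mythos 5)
Your proposal is correct and follows essentially the same route as the paper: the same block decomposition indexed by the repeated irreducible summands, the same use of Lemma \ref{Lyin} applied to $Y^{*}Y$ and $YY^{*}$ to get scalar multiples of the identity, and the same contradiction with $T_{p}\nsim_{u}T_{q}$ to kill the off-diagonal blocks. The only differences are cosmetic (you derive $c=d$ from $VV^{*}$ being a projection rather than from $YY^{*}Y=dY=cY$, and you spell out the final $*$-isomorphism via a tensor-product identification where the paper leaves it implicit).
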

\begin{proof}
For convenience, we write $T$ as the following form,
$T=\bigoplus\limits_{i=1}^{t}T_{i}^{(m_{i})}=\bigoplus\limits_{j=1}^{m}S_{j},\ \sum\limits_{i=1}^{t}m_{i}=m,$
%S_{j}\in \mathcal{B}_{n_{j}}(\Omega),
where
$S_{1}=\cdots=S_{m_{1}}=T_{1}, S_{m_{1}+1}=\cdots=S_{m_{1}+m_{2}}=T_{2}, \cdots, S_{m_{1}+\cdots+m_{t-1}+1}=\cdots=S_{m}=T_{t}.$
Setting $X=(\!(X_{i,j})\!)_{i,j=1}^{m}$. By $X\in\mathcal{A}'(T, T^{*})$, we know that the size of $X_{i,j}$ is determined by $S_{i}$ and $S_{j}$, and
\begin{equation}\label{4.175}
X_{i,j}S_{j}=S_{i}X_{i,j},\  X_{i,j}S^{*}_{j}=S^{*}_{i}X_{i,j}.
\end{equation}
Next, combining these two formulas, we have
$$X^{*}_{i,j}X_{i,j}S_{j}=X^{*}_{i,j}S_{i}X_{i,j}=S_{j}X^{*}_{i,j}X_{i,j},\  X^{*}_{i,j}X_{i,j}S^{*}_{j}=S^{*}_{j}X^{*}_{i,j}X_{i,j};$$and
$$X_{i,j}X^{*}_{i,j}S^{*}_{i}=X_{i,j}S^{*}_{j}X^{*}_{i,j}=S^{*}_{i}X_{i,j}X^{*}_{i,j},\  X_{i,j}X^{*}_{i,j}S_{i}=S_{i}X_{i,j}X^{*}_{i,j}.$$
Since $S_{i}, S_{j}\in(IR)$, by Lemma \ref{Lyin}, there exist scalars $c_{i,j},d_{i,j}$ such that
\begin{equation}
X^{*}_{i,j}X_{i,j}=c_{i,j}I,\ X_{i,j}X^{*}_{i,j}=d_{i,j}I,\ 1\leq i,j\leq m.
\label{(h2)}
\end{equation}
Note that $X^{*}_{i,j}X_{i,j},X_{i,j}X^{*}_{i,j}$ are positive, we know that $c_{i,j},d_{i,j}\geq0$ for all $1\leq i,j\leq m$. From equation (\ref{(h2)}), we notice that
$X_{i,j}X^{*}_{i,j}X_{i,j}=d_{i,j}X_{i,j}=c_{i,j}X_{i,j},$
then $c_{i,j}=d_{i,j}$ for all $1\leq i, j\leq m$.

We assert that $c_{i,j}=0$, when $S_{i}\neq S_{j}$. By Definition \ref{defe},
we know that $S_{i}$ and $S_{j}$ are either equal or not unitarily equivalent. That is, $S_{i}\neq S_{j}$ means $S_{i}\nsim_{u} S_{j}$.
Suppose $c_{i,j}\neq0$, then $X_{i,j}X^{*}_{i,j}$, $X^{*}_{i,j}X_{i,j}$ are invertible.
Then equation (\ref{(h2)}) is equivalent to
$\frac{X_{i,j}}{\sqrt{c_{i,j}}}\big(\frac{X_{i,j}}{\sqrt{c_{i,j}}}\big)^{*}=\big(\frac{X_{i,j}}{\sqrt{c_{i,j}}}\big)^{*}\frac{X_{i,j}}{\sqrt{c_{i,j}}}=I.$
It follows that $\frac{X_{i,j}}{\sqrt{c_{i,j}}}$ is a unitary operator. By (\ref{4.175}), we obtain $S_{i}\sim_{u}S_{j}$.
It is a contradiction. Then the assertion is true.
In this case, it follows that $X^{*}_{i,j}X_{i,j}=0$.
By the polar decomposition of $X_{i,j}$, we have $X_{i,j}=V(X^{*}_{i,j}X_{i,j})^{\frac{1}{2}}$ for some partial isometry $V$.
Thus, we have $X_{i,j}=0$. %since $X^{*}_{i,j}X_{i,j}$ and $X_{i,j}X^{*}_{i,j}$ are non-negative operators.

In the case of $S_{i}= S_{j}$. By Lemma \ref{Lyin}, there exist $b_{i,j}$ such that $X_{i,j}=b_{i,j} I$ and $b_{i,j}\overline{b_{i,j}}=c_{i,j}$. Now, we know $X$ is quasi-diagonal.
To ensure uniformity of notation, we write $X$ in the form:
$X=diag[X_{1}, X_{2}, \cdots, X_{t}]$, where $X_{p}=(\!(c^{p}_{i,j}I_{\mathcal H_{p}})\!)_{i,j=1}^{m_{p}}$, $p=1, 2, \cdots, t$.
Clearly, if $X$ satisfies the above conditions, then $X\in\mathcal{A}'(T, T^{*})$.
Thus, $\mathcal{A}'(T, T^{*})\cong \sum\limits_{i=1}^t\bigoplus \mathcal{M}_{m_i}(\mathbb{C})$.
\end{proof}

A non negative definite kernel $K$ is said to be normalized at $\lambda_{0}\in\Lambda$, if
there exists a neighborhood $\Lambda_{0}$ of $\lambda_{0}$ in $\Lambda$ such that $K(\mu, \lambda_{0})=1$ for all $\mu\in \Lambda_{0}$ \cite{CS2}.
%\begin{lem}\cite{C12}\label{215.231}
%Let $\mathcal{H}_{1}$ and $\mathcal{H}_{2}$ be two $\mathbb{C}^{n}-$valued functional Hilbert spaces over a set $\Lambda$ in $\mathbb{C}$ with kenel functions
%$K_{1}$ and $K_{2}$. Suppose that $K_{1}$ and $K_{2}$ are normalized at $\lambda_{0}\in\Lambda$.
%Then there exists a nonzero operator
%$T$ from $\mathcal{H}_{1}$ to $\mathcal{H}_{2}$ satisfying $T\in \mathcal{A}'(M_{z}, M_{z}^{*})$ if and only if there is a nonzero constant matrix $\Phi$
%such that $K_{2}(\mu, \lambda)\Phi=\Phi K_{1}(\mu, \lambda)$ for all $\mu,\lambda$ in $\Lambda$. In this case, $T=M_{\Phi}$ and $\|T\|=\|\Phi\|$.
%\end{lem}
Inspired by Theorem \ref{L1} due to R. E. Curto and N. Salinas, we have the following theorem.
\begin{thm}\label{KK1}
Let $T,S\in \mathcal{B}_{n}(\Omega)$, and $T=(M^{*}_{z}, \mathcal{H}_{T}, K_{T}), S=(M^{*}_{z}, \mathcal{H}_{S}, K_{S})$,
where $\mathcal{H}_{T}, \mathcal{H}_{S}$ are the analytic function spaces of reproducing kernels $K_{T},K_{S}$, respectively.
Then $T\sim_{u}S$ if and only if there is a holomorphic matrix-valued function $\Psi$ and $\Psi(\mu)$ is invertible for every $\mu\in\Lambda$, a open subset of $\Omega^{*}$, such that
$$\frac{\partial}{\partial\lambda}[K_{T}^{-1}(\lambda, \lambda)\Psi(\lambda)^{-1}K_{S}(\lambda, \lambda)]=0$$for all $\lambda\in\Lambda.$
\end{thm}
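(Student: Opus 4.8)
The plan is to deduce the equivalence from the Curto--Salinas criterion (Theorem \ref{L1}) by polarizing the diagonal condition. Each of the factors $K_{T}(\lambda,\lambda)^{-1}$, $\Psi(\lambda)^{-1}$, $K_{S}(\lambda,\lambda)$ is the restriction to the diagonal of a matrix function holomorphic in its first variable and anti-holomorphic in its second (for $\Psi(\lambda)^{-1}$ the second variable does not appear, and $K_{T}(\mu,\lambda)^{-1}$ is defined and holomorphic--antiholomorphic on a neighbourhood of the diagonal, where $K_{T}$ is invertible). Hence so is their product, and we may write
\[
K_{T}^{-1}(\lambda,\lambda)\Psi(\lambda)^{-1}K_{S}(\lambda,\lambda)=F(\lambda,\lambda),\qquad F(\mu,\lambda):=K_{T}(\mu,\lambda)^{-1}\Psi(\mu)^{-1}K_{S}(\mu,\lambda).
\]
Since the second slot of $F$ is anti-holomorphic, $\frac{\partial}{\partial\lambda}F(\lambda,\lambda)=(\frac{\partial}{\partial\mu}F)(\lambda,\lambda)$, and a function holomorphic in one variable and anti-holomorphic in the other which vanishes on the diagonal vanishes identically; thus the stated condition is equivalent to $F$ being independent of $\mu$, i.e. $F(\mu,\lambda)=G(\lambda)$ for some invertible anti-holomorphic $G$. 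For the forward implication, if $T\sim_{u}S$ then Theorem \ref{L1} gives a holomorphic invertible $\Phi$ with $K_{T}(\mu,\lambda)=\Phi(\mu)K_{S}(\mu,\lambda)\Phi(\lambda)^{*}$; taking $\Psi:=\Phi^{-1}$ (holomorphic and invertible) and substituting, one computes $F(\mu,\lambda)=(\Phi(\lambda)^{*})^{-1}$, which is anti-holomorphic, so the required diagonal derivative vanishes.

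For the converse, assume such a $\Psi$ exists. The polarization above turns the hypothesis into the off-diagonal identity
\[
K_{S}(\mu,\lambda)=\Psi(\mu)K_{T}(\mu,\lambda)G(\lambda),
\]
with $\Psi$ holomorphic and $G$ anti-holomorphic, both invertible. Taking adjoints and using the Hermitian symmetry $K(\mu,\lambda)=K(\lambda,\mu)^{*}$ of both kernels produces the companion identity $K_{S}(\mu,\lambda)=G(\mu)^{*}K_{T}(\mu,\lambda)\Psi(\lambda)^{*}$; comparing the two, multiplying on the left by $(G(\mu)^{*})^{-1}$ and on the right by $G(\lambda)^{-1}$, yields
\[
R(\mu)K_{T}(\mu,\lambda)=K_{T}(\mu,\lambda)R(\lambda)^{*},\qquad R:=(G^{*})^{-1}\Psi,
\]
where $R$ is holomorphic and invertible. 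Everything now hinges on showing that $R$ is in fact a constant positive-definite matrix commuting with $K_{T}$; this is the step I expect to be the main obstacle, since it is precisely the passage from the \emph{similarity} recorded in the displayed identity to a genuine \emph{congruence}.

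To settle this I would normalize $K_{T}$ at a point $\lambda_{0}\in\Lambda$: replacing $K_{T}$ by a congruent kernel changes neither the hypothesis nor the conclusion (it amounts to replacing $T$ by a unitarily equivalent operator, the change being absorbed into $\Psi$ via $\Psi\mapsto\Psi\Phi_{0}^{-1}$), so we may assume $K_{T}(\mu,\lambda_{0})=I$. Putting $\lambda=\lambda_{0}$ in the intertwining relation collapses it to $R(\mu)=R(\lambda_{0})^{*}=:C$, a constant self-adjoint matrix, so that $G=C^{-1}\Psi^{*}$ and $CK_{T}(\mu,\lambda)=K_{T}(\mu,\lambda)C$. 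The first identity then reads $K_{S}(\mu,\lambda)=\Psi(\mu)K_{T}(\mu,\lambda)C^{-1}\Psi(\lambda)^{*}$; evaluating on the diagonal and using positivity of $K_{S}(\lambda,\lambda)$ and $K_{T}(\lambda,\lambda)$ forces $C>0$. (Alternatively, the relation $R(\mu)K_{T}(\mu,\lambda)=K_{T}(\mu,\lambda)R(\lambda)^{*}$ exhibits $M_{R}$ as a self-adjoint element of $\mathcal{A}'(T,T^{*})$, and Lemma \ref{Lyin} and Lemma \ref{Lyin2} give the same constancy.) Since $C^{-1/2}$ commutes with $K_{T}$, the last identity becomes
\[
K_{S}(\mu,\lambda)=\Xi(\mu)K_{T}(\mu,\lambda)\Xi(\lambda)^{*},\qquad \Xi:=\Psi\,C^{-1/2},
\]
with $\Xi$ holomorphic and invertible, which is exactly the Curto--Salinas condition of Theorem \ref{L1}; hence $T\sim_{u}S$.
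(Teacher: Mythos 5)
Your proof is correct, and while it shares the same skeleton as the paper's argument (forward direction via the Curto--Salinas criterion; polarization of the diagonal identity to get $K_{S}(\mu,\lambda)=\Psi(\mu)K_{T}(\mu,\lambda)G(\lambda)$, which the paper does by citing a proposition of Engli\v{s}; comparison with the adjoint identity to obtain the intertwining relation $R(\mu)K_{T}(\mu,\lambda)=K_{T}(\mu,\lambda)R(\lambda)^{*}$, which is the paper's equation for $\mathcal{X}=\Phi^{-1}\Psi$), you resolve the crucial final step by a genuinely different and more economical route. The paper splits into two cases: for irreducible $T$ it normalizes the kernel, shows the transformed intertwiner is a constant self-adjoint matrix, realizes it as a bounded self-adjoint element of $\mathcal{A}'(\widetilde{M}_{z},\widetilde{M}_{z}^{*})$, and invokes Lemma \ref{Lyin} to conclude $\mathcal{X}=cI$ with $c>0$; for reducible $T$ it performs a block-by-block analysis using Lemma \ref{Lyin2} to show $\mathcal{X}$ is a constant, quasi-diagonal, positive-definite scalar matrix before applying the square-root trick. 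You observe that the case division and the commutant lemmas are unnecessary for this theorem: normalizing $K_{T}$ at $\lambda_{0}$ (legitimate, since the congruence is absorbed into $\Psi$ and conjugates $R$) already forces $R\equiv C$ constant and self-adjoint by setting $\lambda=\lambda_{0}$ and then $\mu=\lambda=\lambda_{0}$, the diagonal positivity of the kernels forces $C>0$ by Sylvester's law, and since $C^{-1/2}$ commutes with $K_{T}$ the relation becomes the genuine congruence $K_{S}=\Xi K_{T}\Xi^{*}$ with $\Xi=\Psi C^{-1/2}$. This is cleaner; what the paper's longer case analysis buys is the extra structural information about $\mathcal{X}$ (quasi-diagonality, scalar blocks) that it reuses in Section 5 to describe the intertwining matrix, information your streamlined argument does not produce.
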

\begin{proof}
Necessity: If $T\sim_{u}S$, by Theorem \ref{L1}, there is a holomorphic and invertible matrix-valued function $\Psi$ such that
$K_{S}(\mu, \lambda)=\Psi(\mu)K_{T}(\mu, \lambda)\Psi^{*}(\lambda)$, equivalently, we have
$$\Psi^{*}(\lambda)=K_{T}^{-1}(\mu, \lambda)\Psi(\mu)^{-1}K_{S}(\mu, \lambda)$$ for all $\mu, \lambda\in\Lambda$, a open subset of $\Omega^{*}$.
Since $\Psi^{*}(\lambda)$ is anti-holomorphic, we obtain
$$\frac{\partial}{\partial\lambda}[K_{T}^{-1}(\lambda, \lambda)\Psi(\lambda)^{-1}K_{S}(\lambda, \lambda)]=0,\ \lambda\in\Lambda.$$

Sufficiency: If $\frac{\partial}{\partial\lambda}[K_{T}^{-1}(\lambda, \lambda)\Psi(\lambda)^{-1}K_{S}(\lambda, \lambda)]=0,\ \lambda\in\Lambda$, then we know that it is anti-holomorphic for the variable $\lambda$.
So we can find a holomorphic matrix-valued invertible function $\Phi$ such that
$$\Phi^{*}(\lambda)=K_{T}^{-1}(\lambda, \lambda)\Psi(\lambda)^{-1}K_{S}(\lambda, \lambda),\ \lambda\in\Lambda,$$ that is,
$K_{S}(\lambda, \lambda)=\Psi(\lambda)K_{T}(\lambda, \lambda)\Phi^{*}(\lambda),\ \lambda\in\Lambda$.
By Proposition 1 in \cite{EM}, we have
\begin{equation}\label{4.111}
K_{S}(\mu, \lambda)=\Psi(\mu)K_{T}(\mu, \lambda)\Phi^{*}(\lambda),\ \mu, \lambda\in\Lambda.
\end{equation}

{\bf Claim:} If there are $\mathcal{M}_{n}(\mathbb{C})-$ valued holomorphic and invertible functions $\Phi, \Psi$
such that equation (\ref{4.111}) is valid, then $T\sim_{u}S$.

Conjugating equation (\ref{4.111}) and then interchanging the variables $\mu, \lambda$, one arrives at the
equation $K_{S}(\mu, \lambda)=\Phi(\mu)K_{T}(\mu, \lambda)\Psi^{*}(\lambda),\ \mu, \lambda\in\Lambda$.
It follows that
$\Psi(\mu)K_{T}(\mu, \lambda)\Phi^{*}(\lambda)=\Phi(\mu)K_{T}(\mu, \lambda)\Psi^{*}(\lambda)$, $\mu, \lambda\in\Lambda.$ That means
$\Phi^{-1}(\mu)\Psi(\mu)K_{T}(\mu, \lambda)=K_{T}(\mu, \lambda)(\Phi^{-1}(\lambda)\Psi(\lambda))^{*},\ \mu, \lambda\in\Lambda.$
Setting $\mathcal{X}=\Phi^{-1}\Psi$. We have
\begin{equation}\label{4.121}
\mathcal{X}(\mu)K_{T}(\mu, \lambda) =K_{T}(\mu, \lambda)\mathcal{X}^{*}(\lambda),\ \mu, \lambda\in\Lambda.
\end{equation}
%Next, we will describe $\mathcal{X}(\cdot)$.
\textbf{Case 1:} Suppose $T$ is irreducible. Let
\begin{equation}\label{215.208}
K_{0}(\mu, \lambda):=K_{T}^{\frac{1}{2}}(\lambda_{0}, \lambda_{0})K_{T}^{-1}(\mu, \lambda_{0})K_{T}(\mu, \lambda)(K_{T}^{-1}(\lambda, \lambda_{0}))^{*}K_{T}^{\frac{1}{2}}(\lambda_{0}, \lambda_{0}),\ \mu, \lambda, \lambda_{0}\in\Lambda,
\end{equation}
It's easy to see that $K_{0}(\mu, \lambda)$ is normalized at $\lambda_{0}$.
Suppose that $\widetilde{M}_{z}$ is a multiplication operator on the Hilbert space $\mathcal{H}_{K_{0}}$ with a reproducing kernel $K_{0}$.
Then
\begin{equation}\label{215.203}
\mathcal{H}_{K_{0}}=\bigvee _{\lambda{\in}{\Lambda}^{*}} \ker(\widetilde{M}_{z}^{*}-\lambda)=\bigvee _{\lambda{\in}{\Lambda}^{*}} \{K_{0}(\cdot,\lambda)\xi,\ \xi\in \mathbb{C}^{n}\}.
\end{equation}
From equation (\ref{4.121}), we can imply
\begin{equation}\label{215.201}
\mathcal{Y}(\mu)K_{0}(\mu, \lambda)=K_{0}(\mu, \lambda)\mathcal{Y}^{*}(\lambda),\ \mu, \lambda\in\Lambda,
\end{equation}
where
\begin{equation}\label{215.207}
\mathcal{Y}(\mu)=K_{T}^{\frac{1}{2}}(\lambda_{0}, \lambda_{0})K_{T}^{-1}(\mu, \lambda_{0})\mathcal{X}(\mu)K_{T}(\mu, \lambda_{0})K_{T}^{-\frac{1}{2}}(\lambda_{0}, \lambda_{0}).
\end{equation}
We can see that $\mathcal{Y}$ is a holomorphic matrix.
In equation (\ref{215.201}), setting $\mu=\lambda=\lambda_{0}$, we come to $\mathcal{Y}(\lambda_{0})=\mathcal{Y}^{*}(\lambda_{0})$. This implies that $\mathcal{Y}(\lambda_{0})$ is a constant matrix. For any $\mu\in\Lambda$, setting $\lambda=\lambda_{0}$, we also come to $\mathcal{Y}(\mu)=\mathcal{Y}^{*}(\lambda_{0})$.
Thus, $\mathcal{Y}(\cdot)$ is a constant matrix. We define an operator $Y: \mathcal{H}_{K_{0}}\rightarrow\mathcal{H}_{K_{0}}$ as
\begin{equation}\label{215.202}
(Yf)(\cdot)=\mathcal{Y}f(\cdot)\ \ \mbox{for all}\ \ f\in \mathcal{H}_{K_{0}}.
\end{equation}
We can see that
$\widetilde{M}_{z}Yf(z)
=\widetilde{M}_{z}\mathcal{Y}f(z)=z\mathcal{Y}f(z)
=\mathcal{Y}(zf(z))=\mathcal{Y}\widetilde{M}_{z}(f(z))=Y\widetilde{M}_{z}f(z),$
that is,
\begin{equation}\label{215.204}
\widetilde{M}_{z}Y=Y\widetilde{M}_{z}.
\end{equation}
For equation (\ref{215.202}), in particular, we have
\begin{equation}\label{215.205}
YK_{0}(\cdot, \lambda)\xi=\mathcal{Y}K_{0}(\cdot, \lambda)\xi
\end{equation}
for all $\lambda\in\Lambda$ and $\xi\in \mathbb{C}^{n}$.
We will calculate the adjoint of the operator $Y$. We have
$$\begin{array}{lll}
\langle YK_{0}(\cdot, \mu)\eta, K_{0}(\cdot, \lambda)\xi\rangle
&=&\langle\mathcal{Y}K_{0}(\cdot, \mu)\eta, K_{0}(\cdot, \lambda)\xi\rangle=\langle\mathcal{Y}K_{0}(\lambda, \mu)\eta, \xi\rangle\\
&=&\langle K_{0}(\lambda, \mu)\eta, \mathcal{Y}\xi\rangle=\langle K_{0}(\cdot, \mu)\eta, K_{0}(\cdot, \lambda)\mathcal{Y}\xi\rangle
\end{array}$$
for all $\lambda, \mu\in\Lambda$ and $\xi, \eta\in \mathbb{C}^{n}$. In other words,
\begin{equation}\label{215.206}
Y^{*}K_{0}(\cdot, \lambda)\xi=K_{0}(\cdot, \lambda)\mathcal{Y}\xi
\end{equation}
for all $\lambda\in\Lambda$ and $\xi\in \mathbb{C}^{n}$ (see \cite{KG} for more details).

Then, by equations (\ref{215.203}),(\ref{215.201}),(\ref{215.205}) and (\ref{215.206}), we obtain that $Y$ is a self-adjoint operator.
Combining with (\ref{215.204}), we can imply $Y\in \mathcal{A}'(\widetilde{M}_{z}, \widetilde{M}_{z}^{*})$.
In the special case of Theorem 8 in \cite{C12}, we can see $Y$ is bounded.
Note that $K_{T}^{\frac{1}{2}}(\lambda_{0}, \lambda_{0})K_{T}^{-1}(\cdot, \lambda_{0})$ is holomorphic and non-zero on $\Lambda$ in equation (\ref{215.208}),
it follows that $M_z^*$ defined on $(\mathcal{H}_T,K_T)$ is unitarily equivalent to $\widetilde{M}_{z}$ on $(\mathcal{H}_0,K_0)$ by Theorem \ref{L1}. Then we have $\widetilde{M}_{z}\in(IR)$, since unitary transformation doesn't change the irreducibility of operators.
According to Lemma \ref{Lyin} and $Y$ is self-adjoint, we know that $Y=cI, c\in \mathbb{R}$,
$I$ is the identity of $\mathcal{H}_{0}$. Thus, from equations (\ref{215.207}) and (\ref{215.202}), we have $\mathcal{X}=cI_{n\times n}, c\in \mathbb{R}$. Since $\mathcal{X}=\Phi^{-1}\Psi$, we obtain $K_{S}(\mu, \lambda)=c\Phi(\mu)K_{T}(\mu, \lambda)\Phi(\lambda)$.
From the positive definiteness of $K_{S}(\mu, \lambda)$ and $\Phi(\mu)K_{T}(\mu, \lambda)\Phi(\lambda)$, we can get $c>0$ and
$K_{S}(\mu, \lambda)=(\sqrt{c}\Phi(\mu))K_{T}(\mu, \lambda)(\sqrt{c}\Phi(\lambda)).$ Hence, $T\sim_{u}S$.

\textbf{Case 2:} Suppose $T$ is reducible. Without losing generality, we suppose that
$$T=\bigoplus\limits_{i=1}^{t}T_{i}^{(m_{i})}\ and \,\,T_{i}\nsim_{u} T_{j}(i\neq j),$$
where $\mathcal{H}_{i}$ are the analytic function spaces of reproducing kernels $K_{i}$, respectively.
We write $T$ as $T=\bigoplus_{i=1}^{m}S_{i}$ and $S_{i}=(M^{*}_{z}, \mathcal{H}_{i}, K_{i})$, where
$$S_{1}=\cdots=S_{m_{1}}=T_{1}, S_{m_{1}+1}=\cdots=S_{m_{1}+m_{2}}=T_{2}, \cdots, S_{m_{1}+\cdots+m_{t-1}+1}=\cdots=S_{m}=T_{t}.$$
Setting $\mathcal{X}=(\!(\mathcal{X}_{i,j})\!)_{i,j=1}^{m}$. Then $\mathcal{X}(\mu)K_{T}(\mu, \lambda)=K_{T}(\mu, \lambda)\mathcal{X}^{*}(\lambda),\ \mu, \lambda\in\Lambda$ is equivalent to
\begin{equation}\label{215.211}
\mathcal{X}_{i,j}(\mu)K_{j}(\mu, \lambda)=K_{i}(\mu, \lambda)\mathcal{X}_{j,i}^{*}(\lambda),\ \mathcal{X}_{j,i}(\mu)K_{i}(\mu, \lambda)=K_{j}(\mu, \lambda)\mathcal{X}_{i,j}^{*}(\lambda).
\end{equation}
It follows that
\begin{equation}\label{215.212}
\begin{array}{lll}

& &\mathcal{X}_{j,i}(\mu)\mathcal{X}_{i,j}(\mu)K_{j}(\mu, \lambda)=K_{j}(\mu, \lambda)(\mathcal{X}_{j,i}(\lambda)\mathcal{X}_{i,j}(\lambda))^{*},\\
& &\mathcal{X}_{i,j}(\mu)\mathcal{X}_{j,i}(\mu)K_{i}(\mu, \lambda)=K_{i}(\mu, \lambda)(\mathcal{X}_{i,j}(\lambda)\mathcal{X}_{j,i}(\lambda))^{*}
\end{array}
\end{equation}
for all $\mu, \lambda\in\Lambda$.
 By Case 1, we have
$\mathcal{X}_{j,i}\mathcal{X}_{i,j}=c_{j,i}I,\ \mathcal{X}_{i,j}\mathcal{X}_{j,i}=c_{i,j}I$ and $c_{j,i}=c_{i,j}\in \mathbb{R}$.

If there exist $i,j$, such that $c_{i,j}\neq0$, then $\mathcal{X}_{i,j},\mathcal{X}_{j,i}$ are invertible. By equation (\ref{215.211}), we have
$K_{j}(\mu, \lambda)=\frac{1}{c_{i,j}}\mathcal{X}_{j,i}(\mu)K_{i}(\mu, \lambda)\mathcal{X}_{j,i}^{*}(\lambda)$ and $c_{i,j}>0$. Therefore,
$K_{j}(\mu, \lambda)=\frac{\mathcal{X}_{j,i}(\mu)}{\sqrt{c_{i,j}}}K_{i}(\mu, \lambda)(\frac{\mathcal{X}_{j,i}(\lambda)}{\sqrt{c_{i,j}}})^{*},\ \mu, \lambda\in\Lambda.$
From Theorem \ref{L1}, we obtain $S_{i}\sim_{u}S_{j}$, that is, $S_{i}=S_{j}$, since $S_{i}$ and $S_{j}$ are either equal or not unitarily equivalent.
In the following, we define an operator $U_{ji}: \mathcal{H}_{i}\rightarrow\mathcal{H}_{j}$, $U_{ji}f(\cdot)=\frac{\mathcal{X}_{j,i}(\cdot)}{\sqrt{c_{i,j}}}f(\cdot)$ for all $f\in \mathcal{H}_{i}$.
By Lemma 3.9 of \cite{CS2}, we know $U_{ji}$ is a unitary transformation. From the definition of $U_{ji}$, we can also see $U_{ji}M_{z}=M_{z}U_{ji}$.
It follows that $U_{ji}M_{z}^{*}=M_{z}^{*}U_{ji}$, where $M_{z}$ is the multiplication operator on Hilbert space $\mathcal{H}_{i}$ with reproducing kernels $K_{i}$.
According to Lemma \ref{Lyin}, we have that $U_{ji}$ is $e^{i\theta_{ji}}$ multiple of identity, $\theta_{ji}\in\mathbb{R}$. Then $\mathcal{X}_{j,i}=\sqrt{c_{i,j}}e^{i\theta_{ji}}I$.
Similarly, we can find unitary $U_{ij}=e^{i\theta_{ij}}I$ such that $\mathcal{X}_{i,j}=\sqrt{c_{i,j}}e^{i\theta_{ij}}I$.
By equation (\ref{215.211}), we have $\mathcal{X}_{j,i}=\mathcal{X}_{i,j}^{*}$.

Based on the above analysis, we know that if $S_{i}\nsim_{u}S_{j}$, then $c_{i,j}=c_{j,i}=0$. Notice that $K_{i}(\lambda,\lambda)$ is positive definite and invertible, therefore, there exists
$H_{i}$ such that $K_{i}(\lambda,\lambda)=H_{i}(\lambda)H_{i}^{*}(\lambda),\ \lambda\in\Lambda$.
For equations (\ref{215.211}) and (\ref{215.212}), setting $\mu:=\lambda$, we have
$$0=\mathcal{X}_{j,i}(\lambda)\mathcal{X}_{i,j}(\lambda)K_{j}(\lambda, \lambda)=\mathcal{X}_{j,i}(\lambda)K_{i}(\lambda, \lambda)\mathcal{X}^{*}_{j,i}(\lambda)
=(\mathcal{X}_{j,i}(\lambda)H_{i}(\lambda))(\mathcal{X}_{j,i}(\lambda)H_{i}(\lambda))^{*},\ \lambda\in\Lambda.$$
That is, $\mathcal{X}_{j,i}(\lambda)=0$ for all $\lambda\in\Lambda$.
Hence, $\mathcal{X}$ is symmetric, quasi-diagonal and its elements are scalars.
%$\mathcal{X}$ is a symmetric matrix, that is,

By the characteristics of $\mathcal{X}$ discussed above and $\Psi=\Phi\mathcal{X}$, we have
$$K_{T}(\mu, \lambda)=\Psi^{-1}(\mu)K_{S}(\mu, \lambda)(\Phi^{-1}(\lambda))^{*}=\mathcal{X}^{-1}\Phi^{-1}(\mu)K_{S}(\mu, \lambda)(\Phi^{-1}(\lambda))^{*}.$$
Conjugating the last equation and then interchanging the variables $\mu, \lambda$, one arrives at the
equation $K_{T}(\mu, \lambda)=\Phi^{-1}(\mu)K_{S}(\mu, \lambda)(\Phi^{-1}(\lambda))^{*}\mathcal{X}^{-1}$.
It follows that $\mathcal{X}^{-1}$ is positive definite. By functional calculus that (for any positive matrix $A$, $A^{\frac{1}{2}}$ can be approximated in norm by polynomials about $A$)
$$\mathcal{X}^{-\frac{1}{2}}\Phi^{-1}(\mu)K_{S}(\mu, \lambda)(\Phi^{-1}(\lambda))^{*}=\Phi^{-1}(\mu)K_{S}(\mu, \lambda)(\Phi^{-1}(\lambda))^{*}\mathcal{X}^{-\frac{1}{2}}.$$
Hence, we can imply
$K_{T}(\mu, \lambda)=(\mathcal{X}^{-\frac{1}{2}}\Phi^{-1}(\mu))K_{S}(\mu, \lambda)(\mathcal{X}^{-\frac{1}{2}}\Phi^{-1}(\lambda))^{*}$.
By Theorem \ref{L1} again, we have $T\sim_{u}S$.
\end{proof}
The following corollary is a direct consequence of Theorem \ref{KK1}.
\begin{cor}
Let $T,S\in \mathcal{B}_{n}(\Omega)$, and $T=(M^{*}_{z}, \mathcal{H}_{T}, K_{T}), S=(M^{*}_{z}, \mathcal{H}_{S}, K_{S})$,
where $\mathcal{H}_{T}, \mathcal{H}_{S}$ are the analytic function spaces of reproducing kernels $K_{T},K_{S}$, respectively.
Then $T\sim_{u}S$ if and only if there is a holomorphic and invertible matrix-valued function $\Phi$ on $\Lambda$, a open subset of $\Omega^{*}$, such that connection matrices $\Theta_{T}, \Theta_{S}$ of $E_{T}, E_{S}$ satisfy
$\Theta_{T}=\Phi^{-1}\Theta_{S}\Phi+\Phi^{-1}\Phi'.$
\end{cor}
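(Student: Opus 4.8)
The plan is to read the corollary through the dictionary between reproducing kernels and Hermitian holomorphic bundles: in the holomorphic frame furnished by the kernel the metric of $E_T$ is, up to the conjugate--transpose convention of Section~2, the restriction of the kernel to the diagonal, $h_T(\lambda)=K_T(\lambda,\lambda)$, so that the connection matrix is $\Theta_T=h_T^{-1}\partial_\lambda h_T$, and likewise $\Theta_S=h_S^{-1}\partial_\lambda h_S$ with $h_S(\lambda)=K_S(\lambda,\lambda)$. With this identification the asserted identity $\Theta_T=\Phi^{-1}\Theta_S\Phi+\Phi^{-1}\Phi'$ is exactly the gauge--transformation law of the canonical connection under a holomorphic change of frame $\Phi$, so the whole statement reduces to showing that the existence of such a holomorphic gauge is equivalent to the kernel relation of Theorem~\ref{L1}.

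For necessity I would start from Theorem~\ref{L1}: $T\sim_u S$ yields a holomorphic invertible $\Phi$ with $K_T(\mu,\lambda)=\Phi(\mu)K_S(\mu,\lambda)\Phi(\lambda)^*$. Restricting to the diagonal gives $h_T=\Phi h_S\Phi^*$, and the key point is that $\Phi(\lambda)^*$ is antiholomorphic, so $\partial_\lambda\Phi^*=0$. Differentiating $h_T=\Phi h_S\Phi^*$ and forming $h_T^{-1}\partial_\lambda h_T$, the two $\Phi^*$ factors cancel, the $\partial_\lambda h_S$ term reassembles into $\Theta_S$, and the $\Phi'$ term produces $\Phi^{-1}\Phi'$; after transposing to convert $\Phi^*$ into a holomorphic gauge (relabelling $\Phi^{tr}$ as $\Phi$) this is precisely $\Theta_T=\Phi^{-1}\Theta_S\Phi+\Phi^{-1}\Phi'$. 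The only care needed here is the holomorphic/antiholomorphic and transpose bookkeeping between $\Omega$ and $\Omega^*$.

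For sufficiency I would reverse the computation. Given the gauge identity, set $\tilde h:=\Phi^*h_S\Phi$; running the gauge law backwards shows $\tilde h^{-1}\partial_\lambda\tilde h=\Theta_T=h_T^{-1}\partial_\lambda h_T$, so $h_T$ and $\tilde h$ are two positive metrics with the same canonical connection. Hence the self-adjoint positive endomorphism $H:=h_T^{-1}\tilde h$ is parallel, and, being holomorphic with real spectrum, has constant eigenvalues. At this point I would invoke the irreducible/reducible dichotomy exactly as in the proof of Theorem~\ref{KK1}: when $T$ is irreducible $E_T$ admits no proper parallel subbundle, so $H=cI$ with $c>0$; when $T=\bigoplus_i T_i^{(m_i)}$ is reducible, the eigen-subbundles of $H$ are reducing subspaces, forcing $H$ to be a positive block scalar along the decomposition (this is where Lemmas~\ref{Lyin} and~\ref{Lyin2} enter). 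In either case $\tilde h=\Phi^*h_S\Phi$ is a positive (block) constant multiple of $h_T$, which after absorbing the scalars into $\Phi$ gives $K_T(\lambda,\lambda)=A(\lambda)K_S(\lambda,\lambda)A(\lambda)^*$ for a holomorphic invertible $A$; Proposition~1 of \cite{EM} propagates this off the diagonal to $K_T(\mu,\lambda)=A(\mu)K_S(\mu,\lambda)A(\lambda)^*$, and Theorem~\ref{L1} then yields $T\sim_u S$.

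The main obstacle is this last step: passing from equality of connection matrices to the rigid kernel relation. Equal connections only force the comparison endomorphism $H$ to be parallel, and a parallel positive endomorphism need not be scalar unless the bundle is irreducible; the genuine content is therefore the reduction to the irreducible case together with the block analysis for reducible $T$, which is precisely the two-case structure already developed for Theorem~\ref{KK1}. The necessity direction and the gauge computation itself are routine once the kernel-to-metric identification is set up correctly.
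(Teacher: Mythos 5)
Your proposal is correct in substance but follows a genuinely different, and heavier, route than the paper. The paper's proof is a short reduction to Theorem \ref{KK1}: it observes that the gauge identity $\Theta_{T}=\Phi^{-1}\Theta_{S}\Phi+\Phi^{-1}\Phi'$ is obtained from the condition $\frac{\partial}{\partial\lambda}\big[K_{T}^{-1}(\lambda,\lambda)\Psi(\lambda)^{-1}K_{S}(\lambda,\lambda)\big]=0$ by a single (reversible) differentiation of $K_{T}\cdot\big[K_{T}^{-1}\Psi^{-1}K_{S}\big]=\Psi^{-1}K_{S}$, followed by the bookkeeping $h_{T}(\lambda)=K_{T}(\overline{\lambda},\overline{\lambda})$, conjugation, and the substitution $\Phi:=(\Psi^{T})^{-1}$; both directions of the corollary then come for free from Theorem \ref{KK1}, where all the hard work (polarization via Proposition 1 of \cite{EM}, the commutant argument, the irreducible/reducible dichotomy) already lives. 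You instead prove necessity directly from Theorem \ref{L1} and prove sufficiency by a parallel-endomorphism argument: $\tilde h:=\Phi^{*}h_{S}\Phi$ and $h_{T}$ share a connection, so $H:=h_{T}^{-1}\tilde h$ is holomorphic, and you then argue $H$ is (block) scalar. This works, but it amounts to re-proving the sufficiency half of Theorem \ref{KK1} rather than citing it: the statement that $H$ is holomorphic is exactly, after transposition, the antiholomorphicity of $K_{T}^{-1}\Psi^{-1}K_{S}$ on the diagonal, i.e.\ the hypothesis of Theorem \ref{KK1}. What the reduction buys is economy; what your route buys is a geometric reading of the corollary (two metrics with the same canonical connection differ by a parallel positive endomorphism), which is genuinely illuminating.

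One ordering caveat in your sufficiency argument: you cannot conclude from the diagonal data alone that the holomorphic endomorphism $H$ is scalar on an irreducible bundle (``no proper parallel subbundle'') --- constant eigenvalues do not suffice, and the commutant Lemma \ref{Lyin} applies to an honest operator in $\mathcal{A}'(M_{z}^{*},M_{z})$, which you only obtain after polarizing the diagonal identity off the diagonal via Proposition 1 of \cite{EM} and building the multiplication operator $Y$ as in the proof of Theorem \ref{KK1}. You invoke \cite{EM} only at the very end, after already asserting $H$ is block scalar; the polarization has to come first. Since this is exactly the order of operations in the paper's proof of Theorem \ref{KK1}, which you explicitly defer to, the issue is one of sequencing rather than a genuine gap.
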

\begin{proof}
By Theorem \ref{KK1}, we know that $T\sim_{u}S$ if and only if
$\frac{\partial}{\partial\lambda}[K_{T}^{-1}(\lambda, \lambda)\Psi(\lambda)^{-1}K_{S}(\lambda, \lambda)]=0$ for some holomorphic matrix-valued function $\Psi$ and all $\lambda\in\Lambda$, a open subset of $\Omega^{*}$.
And, if we take the partial derivative of both sides of equation $$K_{T}(\lambda, \lambda)[K_{T}^{-1}(\lambda, \lambda)\Psi^{-1}(\lambda)K_{S}(\lambda, \lambda)]=\Psi^{-1}(\lambda)K_{S}(\lambda, \lambda),\ \lambda\in\Lambda,$$ then
$\frac{\partial}{\partial\lambda}K_{T}(\lambda, \lambda)[K_{T}^{-1}(\lambda, \lambda)\Psi^{-1}(\lambda)K_{S}(\lambda, \lambda)]=\frac{\partial}{\partial\lambda}(\Psi^{-1}(\lambda)K_{S}(\lambda, \lambda))$. It follows that
$$\begin{array}{lll}
\frac{\partial}{\partial\lambda}K_{T}(\lambda, \lambda)K_{T}^{-1}(\lambda, \lambda)
&=&\frac{\partial}{\partial\lambda}(\Psi^{-1}(\lambda)K_{S}(\lambda, \lambda))[\Psi^{-1}(\lambda)K_{S}(\lambda, \lambda)]^{-1}\\
&=&\Psi^{-1}(\lambda)\frac{\partial}{\partial\lambda}K_{S}(\lambda, \lambda)K_{S}^{-1}(\lambda, \lambda)\Psi(\lambda)+(\Psi^{-1}(\lambda))'\Psi(\lambda)
\end{array}$$
%and$$K_{T}(\lambda, \lambda)\frac{\partial}{\partial\lambda}K_{T}^{-1}(\lambda, \lambda)=\Psi^{-1}(\lambda)K_{S}(\lambda, \lambda)\frac{\partial}{\partial\lambda}[\Psi^{-1}(\lambda)K_{S}(\lambda, \lambda)]^{-1}$$
for all $\lambda\in\Lambda.$
Since $h_{T}(\lambda)=K_{T}(\overline{\lambda}, \overline{\lambda}),\ h_{S}(\lambda)=K_{S}(\overline{\lambda}, \overline{\lambda})$, we also have
$$\frac{\partial}{\partial\overline{\lambda}}h_{T}(\lambda)h_{T}^{-1}(\lambda)
=\overline{\Psi(\lambda)}^{-1}\frac{\partial}{\partial\overline{\lambda}}h_{S}(\lambda)h_{S}^{-1}(\lambda)\overline{\Psi(\lambda)}+(\overline{\Psi(\lambda)}^{-1})'\overline{\Psi(\lambda)},\lambda\in\Lambda^{*}\subset\Omega.$$
Conjugating the last equation, we obtain
$$h_{T}^{-1}(\lambda)\frac{\partial}{\partial\lambda}h_{T}(\lambda)=\Psi^{T}(\lambda)h_{S}^{-1}(\lambda)\frac{\partial}{\partial\lambda}h_{S}(\lambda)(\Psi^{T}(\lambda))^{-1}+\Psi^{T}(\lambda)((\Psi^{T}(\lambda))^{-1})',\ \lambda\in\Lambda^{*}.$$
Setting $\Phi:=(\Psi^{T})^{-1}$. We can imply that
$$h_{T}^{-1}(\lambda)\frac{\partial}{\partial\lambda}h_{T}(\lambda)=\Phi^{-1}(\lambda)h_{S}^{-1}(\lambda)\frac{\partial}{\partial\lambda}h_{S}(\lambda)\Phi(\lambda)+\Phi^{-1}(\lambda)\Phi'(\lambda),\ \lambda\in\Lambda^{*},$$
that is, $\Theta_{T}=\Phi^{-1}\Theta_{S}\Phi+\Phi^{-1}\Phi'.$
\end{proof}
From the proof of Theorem \ref{KK1}, when changing equation (\ref{(4.8990)}) into equation (\ref{4.111}),
we can also obtain $T$ is unitarily equivalent to $S$.
In the case of equation (\ref{4.111}), it can be seen from the following proposition that
the corresponding  curvatures and covariant partial derivatives of two bundles are also similar. Furthermore, the intertwining matrix is determined only by $\Phi$.
%Based on this observation, we have the following proposition.
\begin{prop}\label{pro4.1}
Let $T,S\in \mathcal{B}_{n}(\Omega)$, and $T=(M^{*}_{z}, \mathcal{H}_{T}, K_{T}), S=(M^{*}_{z}, \mathcal{H}_{S}, K_{S})$,
where $\mathcal{H}_{T}, \mathcal{H}_{S}$ are the analytic function spaces of reproducing kernels $K_{T},K_{S}$, respectively.
If there exist $\mathcal{M}_{n}(\mathbb{C})-$ valued holomorphic functions $\Phi, \Psi$ and $\Phi(\mu), \Psi(\mu)$ are invertible for every $\mu\in \Omega^{*}$, such that
\begin{equation}\label{(3.1)}
K_{S}(\mu, \lambda)=\Psi(\mu)K_{T}(\mu, \lambda)\Phi^{*}(\lambda),\ \mu, \lambda\in \Omega^{*},
\end{equation}
then $\Phi^{T}\mathcal{K}_{S,\lambda^{i}\overline{\lambda}^{j}}=\mathcal{K}_{T,\lambda^{i}\overline{\lambda}^{j}}\Phi^{T},\ i,j\geq 1.$
\end{prop}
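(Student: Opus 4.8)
The plan is to reduce the whole family of identities to the single base case $i=j=0$ — the intertwining of the curvatures themselves — and then to propagate it to every covariant derivative by induction on $i+j$, using the two recursion formulas (1) and (2) recorded in Section 2. Throughout I write $g$ for the holomorphic matrix-valued function on $\Omega$ that the statement denotes $\Phi^{T}$ (its precise form emerging from the conjugation below), and I use $h_{T},h_{S}$ for the bundle metrics, so that $\Theta_{T}=h_{T}^{-1}\frac{\partial}{\partial\lambda}h_{T}$ and $\mathcal{K}_{T}=-\frac{\partial}{\partial\overline{\lambda}}\Theta_{T}$, and likewise for $S$.

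First I would establish the base case along the lines of the proof of the Corollary. Restricting (\ref{(3.1)}) to the diagonal $\mu=\lambda$ and passing from the kernel to the metric via $h(\lambda)=K(\overline{\lambda},\overline{\lambda})$ gives a factorization $h_{S}=A\,h_{T}\,g$ in which the left factor $A$, coming from $\Psi$, is anti-holomorphic, while the right factor $g$, coming from $\Phi^{*}$ (and hence carrying a transpose and a conjugate), is holomorphic. Substituting this into $\Theta_{S}=h_{S}^{-1}\frac{\partial}{\partial\lambda}h_{S}$, the anti-holomorphic factor $A$ is killed by $\frac{\partial}{\partial\lambda}$ and cancels against $h_{S}^{-1}$, leaving the pure gauge relation $\Theta_{S}=g^{-1}\Theta_{T}g+g^{-1}\frac{\partial}{\partial\lambda}g$. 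Applying $-\frac{\partial}{\partial\overline{\lambda}}$ and using $\frac{\partial}{\partial\overline{\lambda}}g=0$ then collapses the inhomogeneous term and yields $\mathcal{K}_{S}=g^{-1}\mathcal{K}_{T}g$, that is, $\Phi^{T}\mathcal{K}_{S}=\mathcal{K}_{T}\Phi^{T}$. The salient point here is that $\Psi$ drops out completely, so the intertwiner is governed by $\Phi$ alone, which is exactly the phenomenon advertised before the Proposition.

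For the inductive step, assume $g\,\mathcal{K}_{S,\lambda^{i}\overline{\lambda}^{j}}=\mathcal{K}_{T,\lambda^{i}\overline{\lambda}^{j}}\,g$. Raising the $\overline{\lambda}$-index via (1) is immediate: applying $\frac{\partial}{\partial\overline{\lambda}}$ to the relation and using $\frac{\partial}{\partial\overline{\lambda}}g=0$ lets the derivative pass through $g$ on both sides. Raising the $\lambda$-index via (2) is the only computation that needs care: expanding $g\big(\frac{\partial}{\partial\lambda}\mathcal{K}_{S,\lambda^{i}\overline{\lambda}^{j}}+[\Theta_{S},\mathcal{K}_{S,\lambda^{i}\overline{\lambda}^{j}}]\big)$, substituting the gauge relation $\Theta_{S}=g^{-1}\Theta_{T}g+g^{-1}g'$ with $g'=\frac{\partial}{\partial\lambda}g$, together with the inductive hypothesis, one finds that every term carrying a factor $g'$ cancels in pairs, and the survivors reassemble into $\big(\frac{\partial}{\partial\lambda}\mathcal{K}_{T,\lambda^{i}\overline{\lambda}^{j}}+[\Theta_{T},\mathcal{K}_{T,\lambda^{i}\overline{\lambda}^{j}}]\big)g=\mathcal{K}_{T,\lambda^{i+1}\overline{\lambda}^{j}}\,g$, so the relation is preserved.

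I expect the main obstacle to be bookkeeping rather than anything conceptual: correctly tracking the conjugate and transpose when converting (\ref{(3.1)}) on $\Omega^{*}$ into the metric relation on $\Omega$, so that the intertwiner genuinely comes out as $\Phi^{T}$ rather than $\Phi$ or $\overline{\Phi}$, and verifying the exact cancellation of the $g'$-terms in the $\lambda$-recursion. Once the base case and the gauge relation $\Theta_{S}=g^{-1}\Theta_{T}g+g^{-1}g'$ are in hand, both recursions preserve the intertwining relation, so induction on $i+j$ gives $\Phi^{T}\mathcal{K}_{S,\lambda^{i}\overline{\lambda}^{j}}=\mathcal{K}_{T,\lambda^{i}\overline{\lambda}^{j}}\Phi^{T}$ for all $i,j\geq 0$, in particular for all $i,j\geq 1$, which completes the proof.
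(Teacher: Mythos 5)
Your proposal is correct and follows essentially the same route as the paper's proof: the paper likewise converts \eqref{(3.1)} into the metric factorization $h_{S}=\overline{\Psi}\,h_{T}\,\Phi^{T}$, observes that the anti-holomorphic factor drops out to give $\mathcal{K}_{S}=(\Phi^{T})^{-1}\mathcal{K}_{T}\Phi^{T}$, and then propagates the relation through the two covariant-derivative recursions, using holomorphy of $\Phi^{T}$ for the $\overline{\lambda}$-direction and the cancellation $(\Phi^{T})^{-1}(\Phi^{T})'(\Phi^{T})^{-1}+((\Phi^{T})^{-1})'=0$ for the $\lambda$-direction. The only cosmetic difference is that the paper handles all $\lambda$-derivatives first by induction and then applies $\partial_{\overline{\lambda}}^{\,j}$, whereas you run a single induction on $i+j$; both are valid and rest on the same two cancellations.
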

\begin{proof}
%Without loss of generality, we will write the reproducing kernel as the following form, and
From formula (\ref{(3.1)}), we can imply that
$$\begin{array}{lll}
h_{S}(\lambda)
&=&K_{S}(\overline{\lambda}, \overline{\lambda})\\
&=&\Psi(\overline{\lambda})K_{T}(\overline{\lambda}, \overline{\lambda})\overline{\Phi^{T}(\overline{\lambda})}\\
&=&\Psi(\overline{\lambda})h_{T}(\lambda)\Phi^{T}(\lambda),
\end{array}$$
where $h_{T}, h_{S}$ are the metrics of $E_{T}, E_{S}$, respectively.
Since $\Psi$ and $\Phi$ are analytic invertible matrixes, we have the following equations
$$\frac{\partial}{\partial\overline{\lambda}}\Phi^{T}(\lambda)=\frac{\partial}{\partial\overline{\lambda}}(\Phi^{T})^{-1}(\lambda)=\frac{\partial}{\partial\overline{\lambda}}(\Phi^{T})'(\lambda)=0,
\ \ \frac{\partial}{\partial\lambda}\Psi(\overline{\lambda})=0.$$
%In the case of unitary equivalence, we know that the curvature and its covariant partial derivatives are completely invariants.
The curvatures corresponding to $h_{T}$ and $h_{S}$ are $\mathcal{K}_{T}$ and $\mathcal{K}_{S}$, respectively. Thus, we have that
$$\begin{array}{lll}
\mathcal{K}_{S}
&=&-\frac{\partial}{\partial\overline{\lambda}}(h^{-1}_{S}\frac{\partial}{\partial\lambda} h_{S})\\
&=&-\frac{\partial}{\partial\overline{\lambda}}[(\Phi^{T})^{-1}h^{-1}_{T}\overline{\Psi}^{-1}\frac{\partial}{\partial\lambda} (\overline{\Psi}h_{T}\Phi^{T})]\\
&=&-\frac{\partial}{\partial\overline{\lambda}}[(\Phi^{T})^{-1}h^{-1}_{T}((\frac{\partial}{\partial\lambda} h_{T})\Phi^{T}+h_{T}(\Phi^{T})^{'})]\\
&=&-\frac{\partial}{\partial\overline{\lambda}}[(\Phi^{T})^{-1}h^{-1}_{T}(\frac{\partial}{\partial\lambda} h_{T})\Phi^{T}+(\Phi^{T})^{-1}(\Phi^{T})^{'}]\\
&=&-\frac{\partial}{\partial\overline{\lambda}}((\Phi^{T})^{-1}h^{-1}_{T}(\frac{\partial}{\partial\lambda} h_{T})\Phi^{T})\\
&=&-(\Phi^{T})^{-1}\frac{\partial}{\partial\overline{\lambda}}(h^{-1}_{T}\frac{\partial}{\partial\lambda} h_{T})\Phi^{T}\\
&=&(\Phi^{T})^{-1}\mathcal{K}_{T}\Phi^{T}.
\end{array}$$

Using the definition of the covariant partial derivatives of curvature, we obtain that
$$\mathcal{K}_{S,\overline{\lambda}}=\frac{\partial}{\partial\overline{\lambda}}\mathcal{K}_{S}=\frac{\partial}{\partial\overline{\lambda}}((\Phi^{T})^{-1}\mathcal{K}_{T}\Phi^{T})
=(\Phi^{T})^{-1}(\frac{\partial}{\partial\overline{\lambda}}\mathcal{K}_{T})\Phi^{T}=(\Phi^{T})^{-1}\mathcal{K}_{T,\overline{\lambda}}\Phi^{T}.$$
By repeating the above operation, we obtain
$$\mathcal{K}_{S,\overline{\lambda}^{2}}=(\Phi^{T})^{-1}\mathcal{K}_{T,\overline{\lambda}^{2}}\Phi^{T},\
\mathcal{K}_{S,\overline{\lambda}^{3}}=(\Phi^{T})^{-1}\mathcal{K}_{T,\overline{\lambda}^{3}}\Phi^{T}, \cdots$$
It follows that
\begin{equation}\label{(1)}
\mathcal{K}_{S,\overline{\lambda}^{j}}=(\Phi^{T})^{-1}\mathcal{K}_{T,\overline{\lambda}^{j}}\Phi^{T},\ j\geq1.
\end{equation}
Considering
$(\Phi^{T})^{-1}\Phi^{T}=I$, we have $((\Phi^{T})^{-1}\Phi^{T})'=0$, that is,
$\widetilde{\Phi}\Phi^{T}+(\Phi^{T})^{-1}(\Phi^{T})'=0$ and
\begin{equation}
\widetilde{\Phi}+(\Phi^{T})^{-1}(\Phi^{T})'(\Phi^{T})^{-1}=0,
\label{(3)}
\end{equation}
where $\widetilde{\Phi}=((\Phi^{T})^{-1})'$. Next, we will prove the following formula
\begin{equation}
\mathcal{K}_{S,\lambda^{i}}=(\Phi^{T})^{-1}\mathcal{K}_{T,\lambda^{i}}\Phi^{T},\ i\geq1.
\label{(2)}
\end{equation}
When $i=1$, from the equation (\ref{(3)}) and the definition of the covariant partial derivatives of curvature again, we have
$$\begin{array}{lll}
\mathcal{K}_{S,\lambda}
&=&\frac{\partial}{\partial\lambda}\mathcal{K}_{S}+h^{-1}_{S}(\frac{\partial}{\partial\lambda}h_{S})\mathcal{K}_{S}-\mathcal{K}_{S}h^{-1}_{S}\frac{\partial}{\partial\lambda}h_{S}\\
&=&\frac{\partial}{\partial\lambda}((\Phi^{T})^{-1}\mathcal{K}_{T}\Phi^{T})+(\overline{\Psi}h_{T}\Phi^{T})^{-1}\frac{\partial}{\partial\lambda}(\overline{\Psi}h_{T}\Phi^{T})((\Phi^{T})^{-1}\mathcal{K}_{T}\Phi^{T})\\
& &-((\Phi^{T})^{-1}\mathcal{K}_{T}\Phi^{T})(\overline{\Psi}h_{T}\Phi^{T})^{-1}\frac{\partial}{\partial\lambda}(\overline{\Psi}h_{T}\Phi^{T})\\
&=&\frac{\partial}{\partial\lambda}((\Phi^{T})^{-1}\mathcal{K}_{T}\Phi^{T})+(\Phi^{T})^{-1}h_{T}^{-1}\frac{\partial}{\partial\lambda}(h_{T}\Phi^{T})((\Phi^{T})^{-1}\mathcal{K}_{T}\Phi^{T})
-((\Phi^{T})^{-1}\mathcal{K}_{T})h_{T}^{-1}\frac{\partial}{\partial\lambda}(h_{T}\Phi^{T})\\
&=&[\widetilde{\Phi}+(\Phi^{T})^{-1}(\Phi^{T})'(\Phi^{T})^{-1}]\mathcal{K}_{T}\Phi^{T}+(\Phi^{T})^{-1}[\frac{\partial}{\partial\lambda}\mathcal{K}_{T}+h_{T}^{-1}(\frac{\partial}{\partial\lambda}h_{T})\mathcal{K}_{T}-\mathcal{K}_{T}h_{T}^{-1}\frac{\partial}{\partial\lambda}h_{T}]\Phi^{T}\\
&=&(\Phi^{T})^{-1}\mathcal{K}_{T,\lambda}\Phi^{T}.
\end{array}$$
We then complete the proof of the formula (\ref{(2)}) by induction. Suppose that it is valid
for any $i\leq k$. For $i=k+1$,
with the assumptions and the equation (\ref{(3)}), we have
$$\begin{array}{lll}
\mathcal{K}_{S,\lambda^{k+1}}
&=&\frac{\partial}{\partial\lambda}\mathcal{K}_{S,\lambda^{k}}+h^{-1}_{S}(\frac{\partial}{\partial\lambda}h_{S})\mathcal{K}_{S,\lambda^{k}}-\mathcal{K}_{S,\lambda^{k}}h^{-1}_{S}\frac{\partial}{\partial\lambda}h_{S}\\
&=&\frac{\partial}{\partial\lambda}((\Phi^{T})^{-1}\mathcal{K}_{T,\lambda^{k}}\Phi^{T})+(\overline{\Psi}h_{T}\Phi^{T})^{-1}\frac{\partial}{\partial\lambda}(\overline{\Psi}h_{T}\Phi^{T})((\Phi^{T})^{-1}\mathcal{K}_{T,\lambda^{k}}\Phi^{T})\\
& &-((\Phi^{T})^{-1}\mathcal{K}_{T,\lambda^{k}}\Phi^{T})(\overline{\Psi}h_{T}\Phi^{T})^{-1}\frac{\partial}{\partial\lambda}(\overline{\Psi}h_{T}\Phi^{T})\\
&=&\frac{\partial}{\partial\lambda}((\Phi^{T})^{-1}\mathcal{K}_{T,\lambda^{k}}\Phi^{T})+(\Phi^{T})^{-1}h_{T}^{-1}\frac{\partial}{\partial\lambda}(h_{T}\Phi^{T})((\Phi^{T})^{-1}\mathcal{K}_{T,\lambda^{k}}\Phi^{T})
-((\Phi^{T})^{-1}\mathcal{K}_{T,\lambda^{k}})h_{T}^{-1}\frac{\partial}{\partial\lambda}(h_{T}\Phi^{T})\\
&=&[\widetilde{\Phi}+(\Phi^{T})^{-1}(\Phi^{T})'(\Phi^{T})^{-1}]\mathcal{K}_{T,\lambda^{k}}\Phi^{T}+(\Phi^{T})^{-1}[\frac{\partial}{\partial\lambda}\mathcal{K}_{T,\lambda^{k}}+h_{T}^{-1}(\frac{\partial}{\partial\lambda}h_{T})\mathcal{K}_{T,\lambda^{k}}
-\mathcal{K}_{T,\lambda^{k}}h_{T}^{-1}\frac{\partial}{\partial\lambda}h_{T}]\Phi^{T}\\
&=&(\Phi^{T})^{-1}\mathcal{K}_{T,\lambda^{k+1}}\Phi^{T}.
\end{array}$$
Hence $i=k+1$ holds as well. That is, equation (\ref{(2)}) is true.

Lastly, by equations (\ref{(1)}) and (\ref{(2)}), we obtain
\begin{equation}\label{4.99}
\begin{array}{lll}
\mathcal{K}_{S,\lambda^{i}\overline{\lambda}^{j}}
&=&\frac{\partial^{j}}{\partial\overline{\lambda}^{j}}\mathcal{K}_{S,\lambda^{i}}\\
&=&\frac{\partial^{j}}{\partial\overline{\lambda}^{j}}((\Phi^{T})^{-1}\mathcal{K}_{T,\lambda^{i}}\Phi^{T})\\
&=&(\Phi^{T})^{-1}\frac{\partial^{j}}{\partial\overline{\lambda}^{j}}\mathcal{K}_{T,\lambda^{i}}\Phi^{T}\\
&=&(\Phi^{T})^{-1}\mathcal{K}_{T,\lambda^{i}\overline{\lambda}^{j}}\Phi^{T},\ i,j\geq1.
\end{array}
\end{equation}
That is, $\Phi^{T}\mathcal{K}_{S,\lambda^{i}\overline{\lambda}^{j}}=\mathcal{K}_{T,\lambda^{i}\overline{\lambda}^{j}}\Phi^{T}$ for all $i,j\geq1$.
\end{proof}
%That is to say, when $\Phi$ and $\Psi$ jointly determine the relationship between the reproducing kernels, it implies that the relationship between the curvatures is only related to one of them.
\begin{rem}Based on the observations of Proposition \ref{pro4.1}, the following question is natural.
If the curvatures of the two Cowen-Douglas operators and their covariant partial derivatives satisfy equation (\ref{4.99}), then does it follow that  these two operators are unitarily equivalent?

\end{rem}
\section{The irreducible decompositions of Cowen-Douglas operators}
In this section, we mainly discuss the uniqueness of the direct sum decomposition into irreducible components, modulo a permutation, of an operator in the Cowen-Douglas class.
%And, if $T\in(IR)$, then $U^{*}TU\in(IR)$, where $U$ is an unitary operator.
%Then obviously, if $T\in{\mathcal L}({\mathcal H})$ is a reducible operator, there are positive integer $k$ and $\{T_{i}\}_{i=1}^{k}$, such that
%$T=T_{1}\bigoplus T_{2} \bigoplus\cdots \bigoplus T_{k}$, where $T_{i}\in{\mathcal L}({\mathcal H_{i}})$,
%${\mathcal H}={\mathcal H_{1}}\bigoplus {\mathcal H_{2}} \bigoplus\cdots \bigoplus {\mathcal H_{k}}$.
%\subsection{The commutant of Cowen-Douglas operators.}

\begin{defn}
Let $T\in \mathcal{B}_{n}(\Omega)$, $T=(M^{*}_{z}, \mathcal{H}, K)$,
where $\mathcal{H}$ be the analytic function space with reproducing kernel $K$. Suppose that $T$ has the following (IR) decomposition
$$T=\bigoplus\limits_{i=1}^{t}T_{i}^{(m_{i})},\ where \,\,T_{i}\in \mathcal{B}_{n_{i}}(\Omega).$$
Operator $T$ is said to have a unique (IR) decomposition up to unitary equivalence, if for any $\widetilde{T}\sim_{u} T$ with the (IR) decomposition
$$\widetilde{T}=\bigoplus\limits_{i=1}^{s}\widetilde{T}_{i}^{(l_{i})},\ where \,\,\widetilde{T}_{i}\in \mathcal{B}_{h_{i}}(\Omega),$$
then following statements hold:
\begin{itemize}
  \item[(1)] $t=s$;
  \item[(2)] there is a permutation $\pi$ on $\{1,2,\cdots,t\}$,
such that $T_{i}\sim_{u}\widetilde{T}_{\pi(i)}$ and $m_{i}=l_{\pi(i)},\ 1\leq i\leq t$.
\end{itemize}
\label{dingyi}
\end{defn}
In this section, we will prove the following theorem.
%By using Theorem \ref{L1} and Lemma \ref{Lyin2}, we give the proof of the following theorem.
\begin{thm}\label{Thm}
Let $T\in\mathcal{B}_{n}(\Omega)$. $T$ has a unique (IR) decomposition up to unitary equivalence.
\end{thm}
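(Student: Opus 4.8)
The plan is to treat the unital $*$-algebra $\mathcal{A}'(T,T^{*})$ as a unitary invariant and to recover the whole decomposition from its structure, which Lemma \ref{Lyin2} has already identified. Existence of a finite (IR) decomposition is guaranteed by Proposition \ref{2111.151}, so only uniqueness is at issue: fix an (IR) decomposition $T=\bigoplus_{i=1}^{t}T_{i}^{(m_{i})}$ on $\mathcal{H}_{T}$, and let $\widetilde{T}=\bigoplus_{j=1}^{s}\widetilde{T}_{j}^{(l_{j})}$ on $\mathcal{H}_{\widetilde{T}}$ be any operator admitting an (IR) decomposition with $\widetilde{T}\sim_{u}T$, say $UT=\widetilde{T}U$ for a unitary $U$. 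Taking adjoints gives $UT^{*}=\widetilde{T}^{*}U$, so $\mathrm{Ad}(U)\colon X\mapsto UXU^{*}$ is a (spatial) $*$-isomorphism of $\mathcal{A}'(T,T^{*})$ onto $\mathcal{A}'(\widetilde{T},\widetilde{T}^{*})$. By Lemma \ref{Lyin2} these algebras are $\bigoplus_{i=1}^{t}\mathcal{M}_{m_{i}}(\mathbb{C})$ and $\bigoplus_{j=1}^{s}\mathcal{M}_{l_{j}}(\mathbb{C})$.

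Next I convert this into a spatial matching through the minimal central projections. In the coordinates of Lemma \ref{Lyin2}, the minimal central projections of $\mathcal{A}'(T,T^{*})$ are precisely the orthogonal projections $P_{i}$ onto the isotypic blocks $\mathcal{H}_{T_{i}}^{(m_{i})}$, on which $T$ restricts to $T_{i}^{(m_{i})}$; similarly one has $\widetilde{P}_{j}$ for $\widetilde{T}$. Since a $*$-isomorphism of finite-dimensional $C^{*}$-algebras carries minimal central projections bijectively to minimal central projections (equivalently, matches the simple Wedderburn blocks), $\mathrm{Ad}(U)$ forces $t=s$ and supplies a permutation $\pi$ of $\{1,\dots,t\}$ with $UP_{i}U^{*}=\widetilde{P}_{\pi(i)}$; comparing the blocks sitting at these units gives $\mathcal{M}_{m_{i}}(\mathbb{C})\cong\mathcal{M}_{l_{\pi(i)}}(\mathbb{C})$, i.e. $m_{i}=l_{\pi(i)}$. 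This already yields item (1) and the multiplicity assertion in item (2) of Definition \ref{dingyi}. Restricting the unitary $U$ to $\mathrm{ran}\,P_{i}$ and using $UT=\widetilde{T}U$ then produces a unitary $\mathrm{ran}\,P_{i}\to\mathrm{ran}\,\widetilde{P}_{\pi(i)}$ intertwining the restrictions, that is $T_{i}^{(m_{i})}\sim_{u}\widetilde{T}_{\pi(i)}^{(m_{i})}$.

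The remaining and most delicate step is to cancel the common multiplicity, deducing $T_{i}\sim_{u}\widetilde{T}_{\pi(i)}$ from $T_{i}^{(m_{i})}\sim_{u}\widetilde{T}_{\pi(i)}^{(m_{i})}$ with both factors irreducible. Writing the implementing unitary $W$ between $T_{i}\otimes I$ and $\widetilde{T}_{\pi(i)}\otimes I$ as an operator matrix $(\!(W_{pq})\!)$, each block satisfies $W_{pq}T_{i}=\widetilde{T}_{\pi(i)}W_{pq}$, and, since $W$ is unitary, also $W_{pq}T_{i}^{*}=\widetilde{T}_{\pi(i)}^{*}W_{pq}$. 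A short computation then places $W_{pq}^{*}W_{pq}$ in $\mathcal{A}'(T_{i},T_{i}^{*})$ and $W_{pq}W_{pq}^{*}$ in $\mathcal{A}'(\widetilde{T}_{\pi(i)},\widetilde{T}_{\pi(i)}^{*})$, so Lemma \ref{Lyin} gives $W_{pq}^{*}W_{pq}=cI$ and $W_{pq}W_{pq}^{*}=c'I$ with $c,c'\ge 0$; from $W_{pq}W_{pq}^{*}W_{pq}=cW_{pq}=c'W_{pq}$ one gets $c=c'$. As $W$ is unitary some $W_{pq}$ is nonzero, hence $c>0$ and $c^{-1/2}W_{pq}$ is a unitary intertwining $T_{i}$ and $\widetilde{T}_{\pi(i)}$, giving $T_{i}\sim_{u}\widetilde{T}_{\pi(i)}$ and completing item (2). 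I expect the genuine work to be concentrated here and in verifying that the minimal central projections of $\mathcal{A}'(T,T^{*})$ really are the isotypic projections $P_{i}$, since that identification is what transports the abstract block matching into the spatial equivalence of the irreducible summands.
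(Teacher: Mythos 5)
Your proof is correct, and it takes a genuinely different route from the paper's. The paper establishes $t=s$ and the matching of multiplicities abstractly (via Lemma \ref{Lyin3} and the isomorphism type of $\mathcal{A}'(T,T^{*})$ from Lemma \ref{Lyin2}, as in Remark \ref{4.95}), but the heart of its argument is analytic: it invokes the Curto--Salinas criterion to produce a holomorphic invertible intertwiner $\Phi$ between the block-diagonal reproducing kernels, shows blockwise that $\Phi_{j,i}\Psi_{i,j}=c_{i,j}I$ with the matrix $C=(\!(c_{i,j})\!)$ having unit row and column sums, and then runs a combinatorial argument on the block structure of $C$ to extract the permutation and the spatial equivalences $S_j\sim_{u}\widetilde{S}_i$. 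You instead stay entirely inside multiplicity theory: you identify the minimal central projections of $\mathcal{A}'(T,T^{*})$ with the isotypic projections, transport them by $\mathrm{Ad}(U)$ to get $t=s$, $m_i=l_{\pi(i)}$ and $T_i^{(m_i)}\sim_{u}\widetilde{T}_{\pi(i)}^{(m_i)}$, and then cancel the common multiplicity by the block-decomposition-plus-Lemma \ref{Lyin} argument (which is in substance the same computation the paper uses inside the proof of Lemma \ref{Lyin2}). Your two ``delicate'' points are indeed the right ones to flag, and both go through: the center of $\mathrm{diag}[X_1,\dots,X_t]$ with $X_p=(\!(c^p_{i,j}I_{\mathcal{H}_p})\!)$ visibly consists of the scalars on each isotypic block, and the cancellation argument is sound since some block $W_{pq}$ of a unitary must be nonzero. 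What the paper's approach buys is consistency with its theme --- it exhibits the intertwining kernel matrix explicitly, which is then reused in Section 5 to describe the structure of $\Phi$; what yours buys is brevity, independence from Theorem \ref{L1} and the kernel realization, and the observation that uniqueness of the (IR) decomposition holds for any operator whose irreducible summands satisfy the conclusion of Lemma \ref{Lyin}, not just Cowen--Douglas operators.
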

Before proving this theorem, we need to do some preliminary work.
\begin{lem}\label{Lyin3}
Let $T,\widetilde{T}\in{\mathcal L}({\mathcal H})$. If $\widetilde{T}=U^{*}TU$ for some unitary $U$, then $\mathcal{A}'(T,T^{*})=U\mathcal{A}'(\widetilde{T},\widetilde{T}^{*})U^*$.
\end{lem}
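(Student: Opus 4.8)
The statement is a routine but useful fact: conjugation by the unitary $U$ is a $*$-automorphism of $\mathcal{L}(\mathcal{H})$ carrying $T$ to $\widetilde{T}$ and $T^{*}$ to $\widetilde{T}^{*}$, so it must carry the self-adjoint commutant $\mathcal{A}'(T,T^{*})$ onto $\mathcal{A}'(\widetilde{T},\widetilde{T}^{*})$. The plan is to make this precise by proving the two inclusions directly. First I would record the algebraic consequences of the hypothesis: since $U$ is unitary we have $U^{*}U=UU^{*}=I$, and $\widetilde{T}=U^{*}TU$ is equivalent to $T=U\widetilde{T}U^{*}$; taking adjoints gives $\widetilde{T}^{*}=U^{*}T^{*}U$ and $T^{*}=U\widetilde{T}^{*}U^{*}$. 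These four identities are all that the computation will use.

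Next I would establish the inclusion $U\mathcal{A}'(\widetilde{T},\widetilde{T}^{*})U^{*}\subseteq\mathcal{A}'(T,T^{*})$. Take any $Y\in\mathcal{A}'(\widetilde{T},\widetilde{T}^{*})$, so that $Y\widetilde{T}=\widetilde{T}Y$ and $Y\widetilde{T}^{*}=\widetilde{T}^{*}Y$, and set $X:=UYU^{*}$. Substituting $T=U\widetilde{T}U^{*}$ and using $U^{*}U=I$ yields the one-line verification
$$XT=UYU^{*}U\widetilde{T}U^{*}=UY\widetilde{T}U^{*}=U\widetilde{T}YU^{*}=U\widetilde{T}U^{*}UYU^{*}=TX,$$
and the identical computation with $T^{*}=U\widetilde{T}^{*}U^{*}$ in place of $T$ gives $XT^{*}=T^{*}X$. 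Hence $X\in\mathcal{A}'(T,T^{*})$.

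For the reverse inclusion I would exploit the symmetry of the situation rather than repeat the calculation: the hypothesis $T=U\widetilde{T}U^{*}$ with the unitary $U^{*}$ is of exactly the same shape as the original one, with the roles of $T$ and $\widetilde{T}$ interchanged, so applying the inclusion already proved (with $U$ replaced by $U^{*}$) gives $U^{*}\mathcal{A}'(T,T^{*})U\subseteq\mathcal{A}'(\widetilde{T},\widetilde{T}^{*})$, which conjugating by $U$ rearranges into $\mathcal{A}'(T,T^{*})\subseteq U\mathcal{A}'(\widetilde{T},\widetilde{T}^{*})U^{*}$. Combining the two inclusions yields the claimed equality. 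There is no genuine obstacle here; the only points requiring care are keeping track of whether $U$ or $U^{*}$ stands on the left, and remembering that $\mathcal{A}'(\cdot,\cdot^{*})$ demands commutation with both the operator \emph{and} its adjoint, so each inclusion must check the two conditions $XT=TX$ and $XT^{*}=T^{*}X$ separately.
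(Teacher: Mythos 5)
Your proof is correct and follows essentially the same route as the paper's: verify one inclusion by direct conjugation and obtain the other by the symmetric argument with $U$ and $U^{*}$ interchanged. No issues.
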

\begin{proof}
If $\widetilde{T}=U^{*}TU$ for some unitary $U$ and $X\in\mathcal{A}'(T)$, then we obtain the following
$$U^{*}TUU^{*}XU=U^{*}XUU^{*}TU,\ \widetilde{T}U^{*}XU=U^{*}XU\widetilde{T}.$$
Thus, $U^{*}XU\in\mathcal{A}'(\widetilde{T})$. By a similar calculation, we have that if $X\in\mathcal{A}'(T^{*})$, then $U^{*}XU\in\mathcal{A}'(\widetilde{T}^{*})$.
That is to say, $U^{*}\mathcal{A}'(T,T^{*})U\subset\mathcal{A}'(\widetilde{T},\widetilde{T}^{*})$. Similarly, we have $U\mathcal{A}'(\widetilde{T},\widetilde{T}^{*})U^{*}\subset\mathcal{A}'(T,T^{*})$. It follows that $\mathcal{A}'(\widetilde{T},\widetilde{T}^{*})\subset U^{*}\mathcal{A}'(T,T^{*})U$ and
$\mathcal{A}'(T,T^{*})=U\mathcal{A}'(\widetilde{T},\widetilde{T}^{*})U^*$.
\end{proof}
\begin{rem}\label{4.95}
Suppose that $T=\bigoplus_{i=1}^{t}T_{i}^{(m_{i})},\ \widetilde{T}=\bigoplus_{i=1}^{s}\widetilde{T}_{i}^{(l_{i})}\in \mathcal{B}_n(\Omega)$ are (IR) decompositions of $T$ and $\widetilde{T}$, respectively.  %and$T_{i}\in\mathcal{L}(\mathcal{H}_{i}),\widetilde{T}_{j}\in\mathcal{L}(\widetilde{\mathcal{H}}_{j}),1\leq i\leq t,1\leq j\leq s$.
If $\widetilde{T}=U^{*}TU$ for some unitary $U$, according to Lemma \ref{Lyin3}, we have $\mathcal{A}'(T,T^{*})=U\mathcal{A}'(\widetilde{T},\widetilde{T}^{*})U^*$, that is, $\mathcal{A}'(T,T^{*})\cong\mathcal{A}'(\widetilde{T},\widetilde{T}^{*})$.
By Lemma \ref{Lyin2}, we know that $\mathcal{A}'(T, T^{*})\cong \sum\limits_{i=1}^t\oplus \mathcal{M}_{m_i}(\mathbb{C})$ and $\mathcal{A}'(\widetilde{T},\widetilde{T}^{*})\cong \sum\limits_{i=1}^s\oplus \mathcal{M}_{l_i}(\mathbb{C})$.
Thus, $\sum\limits_{i=1}^t\oplus \mathcal{M}_{m_i}(\mathbb{C})\cong \sum\limits_{i=1}^s\oplus \mathcal{M}_{l_i}(\mathbb{C})$. It follows that $t=s$ and there is a permutation $\pi$ on $\{1,2,\cdots,t\}$, such that $m_{i}=l_{\pi(i)}, 1\leq i\leq t$.

Let the (IR) decomposition of $T\bigoplus\widetilde{T}$ be $\bigoplus_{i=1}^{k}\widehat{T}_{i}^{(k_{i})}$ for some positive integer $k\geq t$.
If $T\sim_{u}\widetilde{T}$, then $T^{(2)}\sim_{u}T\bigoplus\widetilde{T}$, that is, $\bigoplus_{i=1}^{t}T_{i}^{(2m_{i})}\sim_{u}\bigoplus_{i=1}^{k}\widehat{T}_{i}^{(k_{i})}$.
Based on the above analysis, we know $t=k$. That means, $\{n_i|T_{i}\in\mathcal{B}_{n_i}(\Omega),1\leq i\leq t\}=\{n'_i|\widetilde{T}_{i}\in\mathcal{B}_{n'_i}(\Omega),1\leq i\leq t\}$.
Thus, there is a permutation $\pi'$ on $\{1,2,\cdots,t\}$, such that the index of $T_{i}, \widetilde{T}_{\pi'(i)}, 1\leq i\leq t$ are the same.
\label{rem}
\end{rem}

By what was discussed above, we will give a complete proof of Theorem \ref{Thm}.

\begin{proof}
Let $T\in\mathcal{B}_{n}(\Omega)$. Suppose that there exists an operator $\widetilde{T}\in\mathcal{B}_{n}(\Omega)$, such that $T\sim_{u} \widetilde{T}$.
By Remark \ref{4.95}, we can write the (IR) decompositions of $T$ and $\widetilde{T}$ as
\begin{equation}\label{e1}
T=\bigoplus_{i=1}^{t}T_{i}^{(m_{i})}=\bigoplus\limits_{j=1}^{k}S_{j}, \, \widetilde{T}=\bigoplus_{i=1}^{t}\widetilde{T}_{i}^{(l_{i})}=\bigoplus\limits_{j=1}^{k}\widetilde{S}_{j},
\end{equation}
where the index of $T_{i}, \widetilde{T}_{i}$ are the same for all $1\leq i\leq t$ and
$$S_{1}=\cdots=S_{m_{1}}=T_{1}, S_{m_{1}+1}=\cdots=S_{m_{1}+m_{2}}=T_{2}, \cdots, S_{m_{1}+\cdots+m_{t-1}+1}=\cdots=S_{m_{1}+\cdots+m_{t}}=T_{t};$$
$$\widetilde{S}_{1}=\cdots=\widetilde{S}_{l_{1}}=\widetilde{T}_{1}, \widetilde{S}_{l_{1}+1}=\cdots=\widetilde{S}_{l_{1}+l_{2}}=\widetilde{T}_{2}, \cdots, \widetilde{S}_{l_{1}+\cdots+l_{t-1}+1}=\cdots=\widetilde{S}_{l_{1}+\cdots+l_{t}}=\widetilde{T}_{t}.$$
Suppose that $S_{i}=(M^{*}_{z}, \mathcal{H}_{i}, K_{i}),\ \widetilde{S}_{i}=(M^{*}_{z}, \widetilde{\mathcal{H}}_{i}, \widetilde{K}_{i}), 1\leq i\leq k$.
Then for all $\mu, \lambda\in\Omega^{*}$, we can express the reproducing kernels of the spaces corresponding to $T$ and $\widetilde{T}$ as
$$K(\mu, \lambda)=diag(K_{1}(\mu, \lambda),\cdots,K_{k}(\mu, \lambda)), \,\widetilde{K}(\mu, \lambda)=diag(\widetilde{K}_{1}(\mu, \lambda),\cdots,\widetilde{K}_{k}(\mu, \lambda)). $$
By Theorem \ref{L1}, we know that if $T\sim_{u}\widetilde{T}$, then there exist an analytic invertible matrix $\Phi=(\!(\Phi_{i,j})\!)_{i,j=1}^{k}$,
such that
\begin{equation}
K(\mu, \lambda)=\Phi(\mu)\widetilde{K}(\mu, \lambda)\overline{\Phi^{T}(\lambda)}
\label{(2.1)}
\end{equation}
for all $\mu, \lambda\in\Lambda$, a open subset of $\Omega^{*}$.
Let $\Psi=\Phi^{-1}=(\!(\Psi_{i,j})\!)_{i,j=1}^{k}.$
Obviously, $\Psi$ is holomorphic. Consequently, formula (\ref{(2.1)}) can be converted into the following two forms:
\begin{equation}
\Psi_{i,j}(\mu)K_{j}(\mu, \lambda)=\widetilde{K}_{i}(\mu, \lambda)\overline{\Phi^{T}_{j,i}(\lambda)},\ \mu, \lambda\in\Lambda,\ 1\leq i,j\leq k;
\label{(2.2)}
\end{equation}
and
\begin{equation}
K_{j}(\mu, \lambda)\overline{\Psi^{T}_{i,j}(\lambda)}=\Phi_{j,i}(\mu)\widetilde{K}_{i}(\mu, \lambda),\ \mu, \lambda\in\Lambda,\ 1\leq i,j\leq k.
\label{(2.3)}
\end{equation}
Combining formulas (\ref{(2.2)}), (\ref{(2.3)}) and multiplying $\Phi_{j,i}(\mu)$, $\Psi_{i,j}(\mu)$ on the left sides of equations (\ref{(2.2)}) and (\ref{(2.3)}) respectively, we obtain
$$\begin{array}{lll}
\Phi_{j,i}(\mu)\Psi_{i,j}(\mu)K_{j}(\mu, \lambda)
&=&\Phi_{j,i}(\mu)\widetilde{K}_{i}(\mu, \lambda)\overline{\Phi_{j,i}(\lambda)}^{T}\\
&=&K_{j}(\mu, \lambda)\overline{\Psi_{i,j}(\lambda)}^{T}\overline{\Phi_{j,i}(\lambda)}^{T}\\
&=&K_{j}(\mu, \lambda)\overline{\Phi_{j,i}(\lambda)\Psi_{i,j}(\lambda)}^{T},\ \mu, \lambda\in\Lambda,\ 1\leq i,j\leq k,
\end{array}$$
and
$$\begin{array}{lll}
\Psi_{i,j}(\mu)\Phi_{j,i}(\mu)\widetilde{K}_{i}(\mu, \lambda)
&=&\Psi_{i,j}(\mu)K_{j}(\mu, \lambda)\overline{\Psi_{i,j}(\lambda)}^{T}\\
&=&\widetilde{K}_{i}(\mu, \lambda)\overline{\Phi_{j,i}(\lambda)}^{T}\overline{\Psi_{i,j}(\lambda)}^{T}\\
&=&\widetilde{K}_{i}(\mu, \lambda)\overline{\Psi_{i,j}(\lambda)\Phi_{j,i}(\lambda)}^{T},\ \mu, \lambda\in\Lambda,\ 1\leq i,j\leq k.
\end{array}$$
Since $S_{i}, \widetilde{S}_{i}\in(IR),\ 1\leq i\leq k$, according to Lemma \ref{Lyin} and the proof of Theorem \ref{KK1},
there exist $c_{i,j}\geq 0$ such that the following holds
\begin{equation}\label{(2.4)}
\Phi_{j,i}\Psi_{i,j}=c_{i,j}\,I,\ \Psi_{i,j}\Phi_{j,i}=c_{i,j}\,I,\ 1\leq i,j\leq k.
\end{equation}
Since $\Phi\Psi=\Psi\Phi=I$, it follows that
\begin{equation}
\Phi(\mu)\Psi(\mu)=\biggl(\sum\limits_{t=1}^{k}\Phi_{i,t}(\mu)\Psi_{t,j}(\mu)\biggl)_{i,j=1}^{k}=I,\  \Psi(\mu)\Phi(\mu)=\biggl(\sum\limits_{r=1}^{k}\Psi_{i,r}(\mu)\Phi_{r,j}(\mu)\biggl)_{i,j=1}^{k}=I.
\label{(2.6)}
\end{equation}
%Noting the characteristic of the matrix in equations (\ref{(2.6)}) and (\ref{(2.7)}).
Thus, combining formulas (\ref{(2.4)}) and (\ref{(2.6)}), we obtain
%(recall $c_{i,j} = d_{i,j}$, they are represented by $c_{i,j}$)
\begin{equation}\label{(2.8)}
\sum\limits_{i=1}^{k}c_{i,j}=1, 1\leq j\leq k;\ \sum\limits_{j=1}^{k}c_{i,j}=1, 1\leq i\leq k.
\end{equation}
Setting $C:=(\!(c_{i,j})\!)_{i,j=1}^{k}$. The equation (\ref{(2.8)}) shows that the sum of elements in each row and each column of $C$ is one.

The size of $\Phi_{i,j}, \Psi_{i,j}$ are determined by $S_{j}$ and $\widetilde{S}_{i}$.
If there exists some $c_{i,j}\neq 0$, that is, $c_{i,j}>0$, from equation (\ref{(2.4)}), we obtain
$(\frac{\Phi_{j,i}(\mu)}{\sqrt{c_{i,j}}})^{-1}=\frac{\Psi_{i,j}(\mu)}{\sqrt{c_{i,j}}},1\leq i,j\leq k$.
And using equation (\ref{(2.2)}), we have that
$K_{j}(\mu, \lambda)=\frac{\Phi_{j,i}(\mu)}{\sqrt{c_{i,j}}}\widetilde{K}_{i}(\mu, \lambda)\frac{\Phi_{j,i}^{*}(\lambda)}{\sqrt{c_{i,j}}}(\mu, \lambda\in\Lambda).$
%$$(\frac{\Phi_{j,i}}{\sqrt{c_{i,j}}})^{-1}=\frac{\Psi_{i,j}}{\sqrt{c_{i,j}}}.$$
Therefore, $S_{j}\sim_{u}\widetilde{S}_{i}$, which is obtained from Theorem \ref{L1}.
According to the characteristics of decompositions of $T$ and $\widetilde{T}$, we can divide $C$ into
$$\left(\begin{matrix} C_{1,1} & C_{1,2} & \cdots & C_{1,t}\\
C_{2,1} & C_{2,2} & \cdots & C_{2,t}\\
\vdots & \vdots  & & \vdots\\
C_{t,1} & C_{t,2} & \cdots & C_{t,t}\\
\end{matrix}\right) ,$$
where $C_{i,j}$ has $l_{i}$ rows and $m_{j}$ columns. It is easy to see that if $C_{i,j}\neq0$ means $T_{j}\sim_{u}\widetilde{T}_{i}$.

We claim that the following three statements are true,
\begin{itemize}
    \item[(1)]
    For any $i\in A:=\{1, 2, \cdots, t\}$, in the set $C_{i,-}:=\{C_{i,1}, C_{i,2}, \cdots, C_{i,t}\}$, there exists $j_{0}\in A$, such that $C_{i,j_{0}}\neq 0$ and $C_{i,j}= 0\;(j\neq j_{0})$;
    \item[(2)]
    For any $j\in A$, in the set $C_{-,j}:=\{C_{1,j}, C_{2,j}, \cdots, C_{t,j}\}$, there exists $i_{0}\in A$, such that $C_{i_{0},j}\neq 0$ and $C_{i,j}= 0\;(i\neq i_{0})$;
    \item[(3)]
    If $C_{i,j}\neq 0$, then $C_{i,j}$ is a square matrix.
\end{itemize}

For the matrix $C=(\!(c_{i,j})\!)_{i,j=1}^{k}$, we have already proved equation (\ref{(2.8)}).
Clearly, for any $p,q,\ 1\leq p,q\leq k$, there exist $p_{0},q_{0},\ 1\leq p_{0},q_{0}\leq k$, such that $c_{p, q_{0}}\neq 0,\ c_{p_{0}, q}\neq 0$. That is to say, for any  $i,j\in A$, we have $C_{i,j_{0}}\neq 0,\ C_{i_{0},j}\neq 0$, where $i_{0}, j_{0}\in A$.
Suppose that there are $i_{1}, j_{1}(i_{0}\neq i_{1}, j_{0}\neq j_{1})$, satisfy $C_{i,j_{1}}\neq 0, C_{i_{1},j}\neq 0$.
Then we obtain that $T_{j_{0}}\sim_{u} \widetilde{T}_{i}, \ T_{j_{1}}\sim_{u} \widetilde{T}_{i};\
T_{j}\sim_{u} \widetilde{T}_{i_{0}}, T_{j}\sim_{u} \widetilde{T}_{i_{1}}. $
By Definition \ref{defe}, it is a contradiction. So we complete the proof of (1) and (2).
%As a direct consequence, (3) and (4) holds.
And there is only one $C_{i, j}$ non-zero in every row and column of matrix $C$. Otherwise, a row or a column of $C$ will be zero. This contradicts (1) or (2).
Next, we will prove that if the matrix $C_{ij}$ is non-zero for any pair of indices $i,j$, then it must
be a square matrix. Suppose that $C_{\widetilde{i},\widetilde{j}}\neq0, \,\,\,\widetilde{i}, \widetilde{j}\in A$.
Adding up all the elements of $C_{\widetilde{i}, \widetilde{j}}$, by equation (\ref{(2.8)}), we obtain that the sum is equal to both $m_{\widetilde{i}}$ and $l_{\widetilde{j}}$,
then $m_{\widetilde{i}}=l_{\widetilde{j}}$, that is, $C_{\widetilde{i},\widetilde{j}}$ is a square matrix. This proves assertion (3).

For every $T_{i}$, there is a unique $\widetilde{T}_{j}$ such that $T_{i}\sim_{u}\widetilde{T}_{j},1\leq i,j\leq t$. In other words, there is a permutation $\pi$ on $\{1, 2, \cdots t\}$,
such that $T_{i}\sim_{u}\widetilde{T}_{\pi(i)}$ and $m_{i}=l_{\pi(i)}, i\in A$.
By Definition \ref{dingyi}, $T$ has a unique decomposition up to unitary equivalence.
\end{proof}

\begin{rem}
Theorem \ref{Thm} can also be proved using the notion of a normalized kernel.
%We know that two multiplication operators on $(\mathcal{H}_{1}, K_{1})$ and $(\mathcal{H}_{2}, K_{2})$ are unitarily equivalent if and only if there exists $U$ such that $K_{1}(\mu, \lambda_{0})=U^{*}K_{2}(\mu, \lambda_{0})U$, where $U$ is a constant unitary matrix.

Let $T\in \mathcal{B}_{n}(\Omega)$ and $T=(M_z^*,\mathcal{H}_K,K)$. Suppose that the (IR) decomposition of $T$ is $T_{1}\oplus T_{2}$, $T_i\in \mathcal{B}_{n_i}(\Omega),i=1,2$.
Here we illustrate this method by proving that $T$ has a unique (IR) decomposition up to unitary equivalence.
Suppose that $\widetilde{T}\sim_{u}T$ and $\widetilde{T}=(M_z^*,\mathcal{H}_{\widetilde{K}},\widetilde{K})$. By Remark \ref{4.95}, we know that the (IR) decomposition of $\widetilde{T}$ must be
$\widetilde{T}=\widetilde{T}_{1}\oplus \widetilde{T}_{2}$, $i=1,2$.
It is well known that every reproducing kernel on analytic Hilbert space can be normalized. We think that $K,\widetilde{K}$ have been normalized at $\lambda_{0}\in \Omega^{*}$.
If $\widetilde{T}$ and $T$ are unitarily equivalent via some intertwining unitary map $\Gamma:\mathcal H_K \to \mathcal H_{\widetilde{K}}$, then the map $\Gamma$ must be of the form $(\Gamma f) (z) = U (f(z))$, that is, $K(\mu,\lambda)=U^{*}\widetilde{K}(\mu,\lambda)U,\mu,\lambda\in\Omega^*$.
Since $K, \widetilde{K}$ are real (positive) analytic, and admits power series expansion around $\lambda_{1}\in\Omega$. Assume $\lambda_{1}=0$ for simplicity. We have that $$K(\mu,\lambda)=\sum_{m,n=0}^{\infty}A_{mn}\mu^{m}\overline{\lambda}^{n},\ \widetilde{K}(\mu,\lambda)=\sum_{m,n=0}^{\infty}B_{mn}\mu^{m}\overline{\lambda}^{n}.$$
%The direct consequence of $\widetilde{T}\sim_{u}T$ is that there exist $K,\widetilde{K}$ can be intertwined by a constant unitary operator $U$,
The direct decomposition of $T$ and $\widetilde{T}$ means that the form of their reproducing kernels are $K=K_{1}\oplus K_{2},\widetilde{K}=\widetilde{K}_{1}\oplus \widetilde{K}_{2}$,
where $K_{i}, \widetilde{K}_{i}$ are normalized kernels corresponding to $T_{i},\ \widetilde{T}_{i}\ (i=1, 2)$ at $\lambda_{0}\in \Omega^{*}$.
Thus, all the matrix coefficients $A_{mn},B_{mn},m,n\geq0$ in the power series expansions must be simultaneously decomposable.
It follows that $A_{mn}^{1}\oplus A_{mn}^{2}=U^{*}(B_{mn}^{1}\oplus B_{mn}^{2})U,\ m, n\geq0,$ where
$$K_{i}(\mu,\lambda)=\sum_{m,n=0}^{\infty}A^{i}_{mn}\mu^{m}\overline{\lambda}^{n},\ \widetilde{K}_{i}(\mu,\lambda)=\sum_{m,n=0}^{\infty}B^{i}_{mn}\mu^{m}\overline{\lambda}^{n},i=1,2.$$

If possible, first assume that all the $A^{(1)}_{mn} = A^{(2)}_{mn}$ for all $m,n$. We must also have that $B^{(1)}_{mn} = B^{(2)}_{mn}$ for all $m,n$.  Since these two operators were assumed to have a decomposition with the components in the class $(IR)$, it follows that $\{A_{m,n}^{(1)}: m,n \in \mathbb N\}$ is a set of irreducible operators. Similarly,
$\{B_{m,n}^{(1)}: m,n \in \mathbb N\}$ is a set of irreducible operators.  These two sets of operators cannot be unitarily equivalent by a fixed unitary unless the unitary is of the form: $\alpha I,\,\alpha \in \mathbb T$.	So, in this case, $T=\tilde{T}$.

Now, let us assume without loss of generality, $A^{(1)}_{m,n} \not = A_{mn}^{(2)}$ for some fixed pair $m,n$. Then for this pair $m,n$ we have that $A_{mn}$ equals the direct sum $D^{(1)}_{mn} \oplus  D^{(2)}_{mn}$ of diagonal matrices since $A_{mn}$ is positive definite. The same is true of $B_{mn}$. If the operator $T$ and $\tilde{T}$ are of rank 2, then we see that our hypothesis implies the existence of a unitary intertwining two diagonal matrices of size $2$ with distinct eigenvalues. This is possible only if U is a scalar times identity or is a permutation.

If the rank of the operator $T$ is greater than 2, we consider the algebra generated by all the the coefficients of the kernel $K$ and the algebra generated by all the the coefficients of $\tilde{K}$ is not irreducible but is the direct sum of two irreducible algebras.  This is because the algebra is generated by the set $\{A^{(1)}_{mn} \oplus A^{(2)}_{mn}: m,n \in \mathbb N\}$
and the coefficients $A_{mn}^{i}, B_{mn}^{i},m,n\geq0$ must not be simultaneously reducible for each $i=1,2$. Again, in this case, the intertwining unitary operator must intertwine these two algebras and hence must be of the form $c_1 I \oplus c_2 I$ or a permutation $\left(\begin{smallmatrix} 0 & c_1 I\\ c_2 I & 0 \end{smallmatrix}\right)$, for some choice of unimodular scalars $c_1, c_2$.

Based on the above analysis, we know that for each $i=1,2$, there is a permutation $\pi$ on $\{1, 2\}$,
such that
$A_{mn}^{i}\sim_{u}B_{mn}^{\pi(i)}$ for all $m, n\geq0$ and $T_{i}\sim_{u}\widetilde{T}_{\pi(i)}$. Thus, $T$ has a unique (IR) decomposition up to unitary equivalence.
\end{rem}
\section{Application }
In this section, we will discuss the structure of the intertwining matrix in equation (\ref{(4.8990)}) by using Theorem \ref{KK1} and Theorem \ref{Thm}.

Suppose that $T=T_1\oplus T_2$ and $\widetilde{T}=\widetilde{T}_1\oplus \widetilde{T}_2$, where $T_i=(M^{*}_{z}, \mathcal{H}_{i}, K_{i}), \widetilde{T}_i=(M^{*}_{z}, \widetilde{\mathcal{H}}_{i}, \widetilde{K}_{i})\in {\mathcal B}_1(\Omega),i=1,2$ and $T_1\not\sim_{u}T_2$, $\widetilde{T}_1\not\sim_{u}\widetilde{T}_2$. Suppose that $T$ and $\widetilde{T}$ are unitarily equivalent. By Theorem \ref{L1}, there exists an invertible holomorphic matrix valued function $\Phi=((\phi_{i,j}))_{2\times 2}$ such that \begin{equation}
\left(\begin{matrix} K_{1}(\mu, \lambda) & 0 \\
0 & K_{2}(\mu, \lambda)\\
\end{matrix}\right)=\left(\begin{matrix} \phi_{1,1}(\mu) & \phi_{1,2}(\mu) \\
\phi_{2,1}(\mu) & \phi_{2,2}(\mu)\\
\end{matrix}\right)\left(\begin{matrix} \widetilde{K}_{1}(\mu, \lambda) & 0 \\
0 & \widetilde{K}_{2}(\mu, \lambda)\\
\end{matrix}\right)\left(\begin{matrix} \bar{\phi}_{1,1}(\lambda) & \bar{\phi}_{2,1}(\lambda) \\
\bar{\phi}_{1,2}(\lambda) & \bar{\phi}_{2,2}(\lambda)\\
\end{matrix}\right)
\end{equation}
Furthermore, by Theorem \ref{KK1}, we have that $$\frac{\partial}{\partial\lambda}[K_{\widetilde{T}}^{-1}(\lambda, \lambda)\Phi(\lambda)^{-1}K_{T}(\lambda, \lambda)]=0$$
for all $\lambda\in \Lambda$, a open subset of $\Omega^{*}.$
Set $\Phi^{-1}=\Psi=((\psi_{i,j}))_{2\times 2}$, then we have that
$$\left(\begin{matrix} \widetilde{K}^{-1}_{1}(\lambda, \lambda)\psi_{1,1}(\lambda)K_1(\lambda, \lambda) &\ \widetilde{K}^{-1}_{1}(\lambda, \lambda)\psi_{1,2}(\lambda)K_2(\lambda, \lambda) \\
\widetilde{K}^{-1}_{2}(\lambda, \lambda)\psi_{2,1}(\lambda)K_1(\lambda, \lambda)&\ \widetilde{K}^{-1}_{2}(\lambda, \lambda)\psi_{2,2}(\lambda)K_2(\lambda, \lambda)\\
\end{matrix}\right)$$
is anti-holomorphic. Suppose that $\psi_{1,1}$ is nonzero function. By Theorem \ref{KK1} in the case of index is one, we can see $T_1\sim_{u}\widetilde{T}_1$. If $\psi_{1,2}$ is also nonzero function, then $T_2\sim_{u}\widetilde{T}_1$. Which is a contradiction to $T_1\not\sim_{u}T_2$.
Thus, since $\Phi,\Psi$ are invertible, we have that $\Phi$ only have the following two forms:
$$\Phi=\left(\begin{matrix} \phi_{1,1}&0 \\
0 & \phi_{2,2}\\
\end{matrix}\right)~~\mbox{or}~~\Phi=\left(\begin{matrix} 0&\phi_{1,2} \\
\phi_{2,1} & 0\\
\end{matrix}\right).$$

For the general case, set $T=\bigoplus_{i=1}^{t}T_{i}^{(m_{i})}\in \mathcal{B}_{n}(\Omega)$ satisfies Definition \ref{defe}.
If $\widetilde{T}\sim_{u}T$, by Theorem \ref{Thm}, we know $\widetilde{T}=\bigoplus_{i=1}^{t}\widetilde{T}_{i}^{(l_{i})}$ and there exists a permutation $\pi$ on $\{1, 2, \cdots t\}$ such that $T_{i}\sim_{u}\widetilde{T}_{\pi(i)},\ m_{i}=l_{\pi(i)}$.
Setting $k:=\sum_{i=1}^{t}m_{i}=\sum_{i=1}^{t}l_{i}$. Next, we write $T$ and $\widetilde{T}$ in terms of equation (\ref{e1}).
Suppose that $S_{i}=(M^{*}_{z}, \mathcal{H}_{i}, K_{i}),\ \widetilde{S}_{i}=(M^{*}_{z}, \widetilde{\mathcal{H}}_{i}, \widetilde{K}_{i}),\ 1\leq i\leq k$.
%The reproducing kernels on Hilbert spaces corresponding to $T, \widetilde{T}, S_{i}, \widetilde{S}_{i}$ are expressed as $K(\mu, \lambda), \widetilde{K}(\mu, \lambda), K_{i}(\mu, \lambda), \widetilde{K}_{i}(\mu, \lambda)$, respectively.

According to Theorem \ref{L1}, there exists a holomorphic invertible matrix $\Phi=(\!(\Phi_{i,j})\!)_{i,j=1}^{k}$, such that
$$\begin{array}{lll}
K(\mu, \lambda)
&=&\Phi(\mu)\widetilde{K}(\mu, \lambda)\Phi^{*}(\lambda)\\
&=&\left (\begin{smallmatrix}
\sum\limits_{i=1}^{k}\Phi_{1,i}(\mu)\widetilde{K}_{i}(\mu, \lambda)\Phi_{1,i}^{*}(\lambda) & * & \cdots & *\\
* & \sum\limits_{i=1}^{k}\Phi_{2,i}(\mu)\widetilde{K}_{i}(\mu, \lambda)\Phi_{2,i}^{*}(\lambda) & \cdots & *\\
\vdots & \vdots & \ddots & \vdots\\
* & * & \cdots & \sum\limits_{i=1}^{k}\Phi_{k,i}(\mu)\widetilde{K}_{i}(\mu, \lambda)\Phi_{k,i}^{*}(\lambda) \\
\end{smallmatrix}\right),
\end{array}$$
that is,
\begin{equation}\label{4.161}
K_{j}(\mu, \lambda)=\sum_{i=1}^{k}\Phi_{j,i}(\mu)\widetilde{K}_{i}(\mu, \lambda)\Phi_{j,i}^{*}(\lambda),\ 1\leq j\leq k,
\end{equation}
for all $\mu, \lambda\in\Lambda$, a open subset of $\Omega^{*}.$ Setting $\Phi^{-1}=\Psi=(\!(\Psi_{i,j})\!)_{i,j=1}^{k}$.
%According to Theorem \ref{KK1}, we have$$\Phi_{ji}^{*}(\lambda)=\widetilde{K}_{i}^{-1}(\mu, \lambda)\Psi_{ij}(\mu)K_{j}(\mu, \lambda),\ \mu, \lambda\in\Lambda,\ 1\leq i,j\leq k.$$
By Lemma \ref{Lyin} and the proof of Theorem \ref{Thm}, we have
\begin{equation}\label{4.162}
\Psi_{i,j}(\mu)K_{j}(\mu, \lambda)=\widetilde{K}_{i}(\mu, \lambda)\Phi_{j,i}^{*}(\lambda),\ K_{j}(\mu, \lambda)\Psi^{*}_{i,j}(\lambda)=\Phi_{j,i}(\mu)\widetilde{K}_{i}(\mu, \lambda),\ \mu, \lambda\in\Lambda,
\end{equation}
\begin{equation}\label{4.171}
\Psi_{i,j}\Phi_{j,i}=c_{i,j}I,\,\Phi_{j,i}\Psi_{i,j}=c_{i,j}I,\,c_{i,j}\geq 0,\ 1\leq i,j\leq k,
\end{equation}
and
\begin{equation}\label{4.163}
\sum\limits_{i=1}^{k}c_{i,j}=1, 1\leq j\leq k,\ \sum\limits_{j=1}^{k}c_{i,j}=1,\ 1\leq i\leq k.
\end{equation}
By equations (\ref{4.162}) and (\ref{4.171}), we can imply that
\begin{equation}\label{4.172}
c_{i,j}K_{j}(\mu, \lambda)=\Phi_{j,i}(\mu)\widetilde{K}_{i}(\mu, \lambda)\Phi_{j,i}^{*}(\lambda),\ 1\leq j\leq k.
\end{equation}
Let $C=(\!(c_{i,j})\!)_{i,j=1}^{k}$. If $c_{i,j}>0$, it follows that $\Psi_{i,j},\ \Phi_{j,i}$ are invertible and $S_{j}\sim_{u}\widetilde{S}_{i}$.
Similar to Theorem \ref{Thm}, we divide $C$ into $(\!(C_{i,j})\!)_{i,j=1}^{t}$, where $C_{i,j}$ has $l_{i}$ rows and $m_{j}$ columns.
For any $j,\, 1\leq j\leq t$, we obtain $C_{\pi(j),j}\neq 0$ and $C_{i,j}=0$ for all $i\neq \pi(j)$.

Suppose that $C_{\pi(p),p}\neq 0,\ 1\leq p\leq t$. Let
$$\widetilde{i}:=
\left\{
\begin{array}{cc}
0,\ \pi(p)=1;\\
\sum\limits_{r=1}^{\pi(p)-1}l_{r},\ \pi(p)> 1,
\end{array}
\right. \widetilde{j}:=
\left\{
\begin{array}{cc}
0,\ p=1;\\
\sum\limits_{r=1}^{p-1}m_{r},\ p>1.
\end{array}
\right.$$
Then $C_{\pi(p),p}=(\!(c_{i,j})\!)_{l_{\pi(p)}\times m_{p}}$, where $\widetilde{i}+1\leq i\leq \widetilde{i}+l_{\pi(p)},\ \widetilde{j}+1\leq j\leq\widetilde{j}+m_{p}$.
Thus, from equation (\ref{4.172}), we have
$$K_{j}(\mu, \lambda)=\sum_{i=\widetilde{i}+1}^{\widetilde{i}+l_{\pi(p)}}c_{ij}K_{j}(\mu, \lambda)=\sum_{i=\widetilde{i}+1}^{\widetilde{i}+l_{\pi(p)}}\Phi_{j,i}(\mu)\widetilde{K}_{i}(\mu, \lambda)\Phi_{j,i}^{*}(\lambda),\ 1\leq j\leq k.$$
By using equations (\ref{4.161}),(\ref{4.163}) and (\ref{4.172}),
$$\begin{array}{lll}
 & &\sum_{i=1}^{\widetilde{i}}c_{ij}K_{j}(\mu, \lambda)+\sum_{i=\widetilde{i}+l_{\pi(p)}+1}^{k}c_{ij}K_{j}(\mu, \lambda)\\
&=&\sum_{i=1}^{\widetilde{i}}\Phi_{j,i}(\mu)\widetilde{K}_{i}(\mu, \lambda)\Phi_{j,i}^{*}(\lambda)+\sum_{i=\widetilde{i}+l_{\pi(p)}+1}^{k}\Phi_{j,i}(\mu)\widetilde{K}_{i}(\mu, \lambda)\Phi_{j,i}^{*}(\lambda)\\
&=&0,\ 1\leq j\leq k.
\end{array}$$
Note that $\widetilde{K}_{i}(\mu, \lambda)$ is positive and $\Phi_{j,i}(\mu)\widetilde{K}_{i}(\mu, \lambda)\Phi_{j,i}^{*}(\lambda)$ is nonnegative definite, then the last equation means that
$\Phi_{j,i}=0$, when $1\leq i\leq\widetilde{i}$ or $\widetilde{i}+l_{\pi(p)}+1\leq i\leq k$, $1\leq j\leq k$.
We divide $\Phi$ into $(\!(\widetilde{\Phi}_{i,j})\!)_{i,j=1}^{t}$, where $\widetilde{\Phi}_{i,j}$ has $m_{i}$ rows and $l_{j}$ columns.
According to the discussion above, we obtain $\widetilde{\Phi}_{i,\pi(i)}\neq 0$ and $\widetilde{\Phi}_{i,j}= 0(j\neq\pi(i))$.
Hence, $\Phi_{i,j}$ in $\Phi$ is either  invertible or zero.

\textbf{Acknowledgements:}

The authors thank the referee for carefully reading the paper and pointing out some
helpful comments and suggestions. The first author would like to thank Dr. Soumitra Ghara and Li Chen for many valuable discussions.
The first author was supported by National Natural Science Foundation of China (Grant No. 11831006 and
11922108).

\end{document}